\theoremstyle{plain}
\newtheorem{theo}{Theorem}[section]
\newtheorem{prop}[theo]{Proposition}
\theoremstyle{definition}
\theoremstyle{remark}
\numberwithin{equation}{section}
\title{Wave equation with Robin condition, quantitative estimates of strong unique continuation at the boundary 
}
\author { Eva Sincich \thanks{Universit\`a degli Studi di Trieste, Italy, E-mail:
\textsf{esincich@units.it}}\ \  \ \
Sergio Vessella\thanks{Universit\`a degli Studi di Firenze, Italy, E-mail:
\textsf{sergio.vessella@unifi.it}}}
\date{}
\begin{document}

\setcounter{section}{0}
\setcounter{secnumdepth}{2}

\maketitle
{\it{Dedicated to Giovanni Alessandrini on the occasion of his 60th birthday}}
\vskip 0.5 cm 
\begin{abstract}
The main result of the present paper consists in a quantitative estimate of unique continuation at the boundary for solutions to the wave equation. Such estimate is the sharp quantitative counterpart of the following strong unique continuation property: let $u$ be a solution to the wave equation that satisfies an homogeneous Robin condition on a portion $S$ of the boundary and the restriction of $u_{\mid S}$ on $S$ is flat on a segment $\{0\}\times J$ with $0\in S$  then $u_{\mid S}$ vanishes in a neighborhood of $\{0\}\times J$.
\medskip

\noindent\textbf{Mathematics Subject Classification (2010)}
Primary 35R25, 35L; Secondary 35B60, 35R30.

\medskip

\noindent \textbf{Keywords}
Stability Estimates, Unique Continuation Property, Hyperbolic Equations, Robin problem.
\end{abstract}

\section{Introduction} \label{introduction}
The strong unique continuation properties at the boundary and the related quantitative estimates have been well understood in the context of second order elliptic equations, \cite{AE}, \cite{Ku-Ny}, and in the context of second order parabolic equations \cite{EsVe}, \cite{EsFeVe}, \cite{Ve}. For instance, in the framework of elliptic equations, the doubling inequality at the boundary and three sphere inequality are the typical forms in which such quantitative estimates of unique continuation occur \cite{A-R-R-V}. Similar forms, like three cylinder inequality or two-sphere one cylinder inequality, occur in the parabolic case \cite{Ve}. In the context of hyperbolic equation, strong properties of unique continuation at the interior and the related quantitative estimates are less studied \cite{Le}, \cite{Ba-Za}, \cite{Ma}, \cite{Ve3}. Also, we recall here the papers \cite{CheDY}, \cite{CheYZ} and \cite{Ra} in which unique continuation properties are proved along and across lower dimensional manifolds for the wave equation. We refer to \cite{B-K-L1}, \cite{B-K-L1},  \cite{La-L} for recent result of quantitative estimate for hyperbolic equations. Such results are the quantitative counterpart of the unique continuation properties for equation with partially analytic coefficients proved in \cite{hormanderpaper2}, \cite{Ro-Zu} and  \cite{Ta}, see also \cite{isakovlib2}.

Quantitative estimates of strong unique continuation at the boundary are one of most important tool which enables to prove sharp stability estimates for inverse problems for PDE with unknown boundaries or with unknown boundary coefficients of Robin type, \cite{A-B-R-V}, \cite{Si2} (elliptic equations), \cite{B-Dc-Si-Ve}, \cite{CRoVe1}, \cite{DcRVe}, \cite{Ve} (parabolic equations), \cite{Ve2} (hyperbolic equations). In the context of elliptic and parabolic equations, the stability estimates that were proved are optimal \cite{Dc-R}, \cite{Al},  \cite{DcRVe}.

To the authors knowledge there exits no result in the literature concerning quantitative estimates of strong unique continuation at the boundary for hyperbolic equations.

In order to make clear what we mean, we illustrate our result in a particular and meaningful case. Let $A(x)$ be a real-valued symmetric $n\times n$, $n\geq 2$, matrix whose entries are functions of Lipschitz class satisfying a uniform ellipticity condition. Let $u$ be a solution to
\begin{equation} \label{INTR1}
\partial^2_{t}u-\mbox{div}\left(A(x)\nabla_x u\right)=0, \quad \hbox{in } B_1^+\times J,
\end{equation}
 where $B_1^+=\left\{x=(x',x_n)\in \mathbb{R}^{n}: |x|<1, x_n>0\right\}$ and $J=(-T,T)$ is an interval of $\mathbb{R}$. Assume that $u$ satisfies the following Robin condition
 \begin{equation} \label{INTR2}
A(x',0)\nabla_x u(x',0,t)\cdot \nu+\gamma(x')u(x',0,t)=0, \quad \hbox{in } B'_1\times J,
\end{equation}
where $B'_1$ is the $\mathbb{R}^{n-1}$ ball of radius $1$ centered at 0, $\nu$ denotes the outer unit normal to $B'_1$ and $\gamma$, the Robin coefficient, is of Lipschitz class. The quantitative estimate of strong unique continuation that we provide here may be briefly described as follows. Let $r\in (0,1)$ and assume that

\begin{equation}\label{Sergio1}
\sup_{t\in J}\left\Vert u(\cdot,0,t) \right\Vert_{L^2\left(B'_{r}\right)}\leq \varepsilon \quad\mbox{  and }\quad \left\Vert u(\cdot,0) \right\Vert_{H^2\left(B_1^+\right)}\leq 1,
\end{equation}
where $\varepsilon<1$.
Then

\begin{gather}
\label{SUCP-INTR}
\left\Vert u(\cdot,0,0) \right\Vert_{L^2\left(B'_{s_0}\right)} \leq C\left\vert\log \left(\varepsilon^{\theta}\right) \right\vert^{-\alpha},
\end{gather}
where $s_0\in (0,1)$, $C\geq 1$, $\alpha>0$ are constants independent of $u$ and $r$ and
\begin{equation}
\label{theta-INTR}
\theta= |\log r|^{-1}.
\end{equation}
For the precise statement of our result we refer to Theorem 2.1. Roughly speaking, in such a Theorem the half ball $B_1^+$ is replaced by the region $\{(x',x_n)\in B_1: x_n>\phi(x')\}$ where $\phi\in C^{1,1}\left(B^{\prime}_1\right)$ satisfies $\phi(0)=\left\vert\nabla_{x'}\phi(0)\right\vert=0$. In addition, $u$ satisfies the Robin condition \eqref{INTR2} on $S_1\times J$ where $S_1=\{(x',\phi(x')): x' \in B^{\prime}_1\}$.

The estimate \eqref{SUCP-INTR} is a sharp estimate from two points of view:

(i) The logarithmic character of the estimate cannot be improved as it is shown by a well-known counterexample of John for the wave equation, \cite{J};

(ii) The sharp dependence of $\theta$ by $r$. Indeed it is easy to check that the estimate \eqref{SUCP-INTR} implies that the following strong unique continuation property at the boundary holds true. Let $u$ satisfy \eqref{INTR1} and \eqref{INTR2} and assume that
  \[\sup_{t\in J}\left\Vert u(\cdot,0,t) \right\Vert_{L^2\left(B'_{r}\right)}=\mathcal{O}(r^N) \mbox{, } \forall N\in\mathbb{N} \mbox{, as  } r\rightarrow 0 \] then we have
 \[u(x',0,t)= 0 \quad \hbox{for every } (x',t)\in\mathcal{U},\]
 where $\mathcal{U}$ is a neighborhood of $\{0\}\times J$.

In order to prove the quantitative estimate \eqref{SUCP-INTR}, we have mainly refined the strategy developed in \cite{Ve3} in which the author, among various results, proved that if
\begin{equation*}
\sup_{t\in J}\left\Vert u(\cdot,t) \right\Vert_{L^2\left(B^+_{r}\right)}\leq \varepsilon \quad\mbox{  and }\quad \left\Vert u(\cdot,0) \right\Vert_{H^2\left(B_1^+\right)}\leq 1,
\end{equation*}
then
\begin{gather}
\label{SUCP-INTR2}
\left\Vert u(\cdot,0) \right\Vert_{L^2\left(B^+_{s_0}\right)} \leq C\left\vert\log \left(\varepsilon^{\theta}\right) \right\vert^{-1/6},
\end{gather}
where $\theta= |\log r|^{-1}$, $s_0\in (0,1)$, $C\geq 1$ are constants independent of $u$ and $r$ and an homogeneous Neumann boundary condition applies instead of \eqref{INTR2}.
To carry out our proof, we first adapt an argument used in \cite{Si} in the elliptic context which enable to reduce the Robin boundary condition into a Neumann boundary one. Subsequently we need a careful refinement of some arguments used in \cite{Ve3}. Actually, to fulfill our proof it is not sufficient to apply the above estimate \eqref{SUCP-INTR2}. In order to illustrate this point, a comparison with the analog elliptic context (i.e. $u$ is time independent) could be useful. In such an elliptic context \cite{Si} instead of  \eqref{Sergio1} we would have

 $$\left\Vert u(\cdot,0) \right\Vert_{L^2\left(B'_{r}\right)}\leq \varepsilon \quad\mbox{  and }\quad \left\Vert u \right\Vert_{H^2\left(B_1^+\right)}\leq 1. $$ Thus, from  stability estimates for the Cauchy problem \cite{A-R-R-V} and regularity result we would obtain the following  Holder estimate
 $$\left\Vert u \right\Vert_{L^2\left(B^+_{r}\right)}\le C\varepsilon^{\beta}$$
 where $C$ and $\beta\in (0,1)$ are independent on $u$ and $r$. By using the above estimate, the three sphere inequality at the boundary and standard regularity results we would have
  $$\left\Vert u \right\Vert_{H^1\left(B^+_{\rho}\right)}\le C\varepsilon^{\vartheta},$$
 where $0<\rho<1$ and $\vartheta \sim |\log r|^{-1}$ as $r\rightarrow 0$. Finally, by trace inequality we would obtain
 $$\left\Vert u \right\Vert_{L^2\left(B'_{\rho/2}\right)}\le C\varepsilon^{\vartheta}.$$

The application of the same argument in the hyperbolic case would lead to a \emph{loglog} type estimate instead of the desired \emph{single log} one \eqref{SUCP-INTR}. In fact, opposite to the elliptic case, in the hyperbolic context the dependence of the interior values of the solution upon the Cauchy data is \emph{logarithmic}. As a consequence, by combining such a \emph{log} dependence with the logarithmic estimate in \eqref{SUCP-INTR2} we would obtain a \emph{loglog} type estimate for $\left\Vert u(\cdot,0,0) \right\Vert_{L^2\left(B'_{s_0}\right)}$.


The plan of the paper is as follows. In Section \ref{MainRe} we state the main result of this paper. In Section \ref{SUCP_Estimates} we prove our main theorem, in Section \ref{AR} we discuss some auxiliary results and in Section \ref{conclusions} we conclude by summarizing the main steps of our proof.

\section{The main result}\label{MainRe}
\subsection{Notation and Definition} \label{sec:notation}

In several places within this manuscript it will be useful to single out one coordinate
direction. To this purpose, the following notations for
points $x\in \mathbb{R}^n$ will be adopted. For $n\geq 2$,
a point $x\in \mathbb{R}^n$ will be denoted by
$x=(x',x_n)$, where $x'\in\mathbb{R}^{n-1}$ and $x_n\in\mathbb{R}$.
Moreover, given  $r>0$, we will denote by $B_r$, $B'_r$ $\widetilde{B}_r$ the ball of $\mathbb{R}^{n}$, $\mathbb{R}^{n-1}$ and $\mathbb{R}^{n+1}$ of radius $r$ centred at 0.
For any open set $\Omega\subset\mathbb{R}^n$ and any function (smooth enough) $u$  we denote by $\nabla_x u=(\partial_{x_1}u,\cdots, \partial_{x_n}u)$ the gradient of $u$. Also, for the gradient of $u$ we use the notation $D_xu$. If $j=0,1,2$ we denote by $D^j_x u$ the set of the derivatives of $u$ of order $j$, so $D^0_x u=u$, $D^1_x u=\nabla_x u$ and $D^2_xu$ is the Hessian matrix $\{\partial_{x_ix_j}u\}_{i,j=1}^n$. Similar notation are used whenever other variables occur and $\Omega$ is an open subset of $\mathbb{R}^{n-1}$ or a subset of $\mathbb{R}^{n+1}$. By $H^{\ell}(\Omega)$, $\ell=0,1,2$ we denote the usual Sobolev spaces of order $\ell$, in particular we have $H^0(\Omega)=L^2(\Omega)$.

For any interval $J\subset \mathbb{R}$ and $\Omega$ as above we denote by
 \[\mathcal{W}\left(J;\Omega\right)=\left\{u\in C^0\left(J;H^2\left(\Omega\right)\right): \partial_t^\ell u\in C^0\left(J;H^{2-\ell}\left(\Omega\right)\right), \ell=1,2\right\}.\]

We shall use the letters $C,C_0,C_1,\cdots$ to denote constants. The value of the constants may change from line to line, but we shall specified their dependence everywhere they appear.

\subsection{Statements of the main results}\label{QEsucp}
Let $A(x)=\left\{a^{ij}(x)\right\}^n_{i,j=1}$ be a real-valued symmetric $n\times n$ matrix whose entries are measurable functions and they satisfy the following conditions for given constants $\rho_0>0$, $\lambda\in(0,1]$ and $\Lambda>0$,
\begin{subequations}
\label{1-65}
\begin{equation}
\label{1-65a}
\lambda\left\vert\xi\right\vert^2\leq A(x)\xi\cdot\xi\leq\lambda^{-1}\left\vert\xi\right\vert^2, \quad \hbox{for every } x, \xi\in\mathbb{R}^n,
\end{equation}
\begin{equation}
\label{2-65}
\left\vert A(x)-A(y)\right\vert\leq\frac{\Lambda}{\rho_0} \left\vert x-y \right\vert, \quad \hbox{for every } x, y\in\mathbb{R}^n.
\end{equation}
\end{subequations}

Let $\phi$ be a function belonging to $C^{1,1}\left(B^{\prime}_{\rho_0}\right)$ that satisfies

\begin{subequations}
\label{phi}
\begin{equation}
\label{phi_0}
\phi(0)=\left\vert\nabla_{x'}\phi(0)\right\vert=0
\end{equation}
\begin{equation}
\label{phi_M0}
\left\Vert\phi\right\Vert_{C^{1,1}\left(B^{\prime}_{\rho_0}\right)}\leq E\rho_0,
\end{equation}
\end{subequations}
where

\begin{equation*}
\left\Vert\phi\right\Vert_{C^{1,1}\left(B^{\prime}_{\rho_0}\right)}=\left\Vert\phi\right\Vert_{L^{\infty}\left(B^{\prime}_{\rho_0}\right)}
+\rho_0\left\Vert\nabla_{x'}\phi\right\Vert_{L^{\infty}\left(B^{\prime}_{\rho_0}\right)}+
\rho_0^2\left\Vert D_{x'}^2\phi\right\Vert_{L^{\infty}\left(B^{\prime}_{\rho_0}\right)}.
\end{equation*}
For any $r\in (0,\rho_0]$ denote by
\[
K_{r}:=\{(x',x_n)\in B_{r}: x_n>\phi(x')\}
\]
and
\[S_{r}:=\{(x',\phi(x')): x' \in B^{\prime}_{r}\}.\]

We assume that the Robin coefficient $\gamma$ belongs to $C^{0,1}(S_{\rho_0})$ and for a given $\bar{\gamma}>0$ is such that

\begin{equation}
\label{gamma_0}
\left\Vert\gamma\right\Vert_{C^{0,1}\left(S_{\rho_0}\right)}\leq \bar{\gamma}\ .
\end{equation}

Let $U\in \mathcal{W}\left([-\lambda\rho_0,\lambda\rho_0];K_{\rho_0}\right)$ be a solution to

\begin{equation} \label{4i-65Boundary}
\partial^2_{t}U-\mbox{div}\left(A(x)\nabla_x U\right)=0, \quad \hbox{in } K_{\rho_0}\times(-\lambda\rho_0,\lambda\rho_0),
\end{equation}
satisfying the following Robin condition
\begin{equation} \label{RobinBoundary}
A\nabla_x U\cdot \nu + \gamma U=0, \quad \hbox{on } S_{\rho_0}\times(-\lambda\rho_0,\lambda\rho_0),
\end{equation}
where $\nu$ denotes the outer unit normal to $S_{\rho_0}$.

Let $r_0\in (0,\rho_0]$ and denote by

\begin{equation} \label{4ii-65Boundary}
\varepsilon=\sup_{t\in (-\lambda\rho_0,\lambda\rho_0)}\left(\rho_0^{-n+1}\int_{S_{r_0}}U^2(\sigma,t)d\sigma\right)^{1/2}
\end{equation}
and

\begin{equation} \label{4iii-65Boundary}
H=\left(\sum_{j=0}^2\rho_0^{j-n}\int_{K_{\rho_0}}\left\vert D_x^jU(x,0)\right\vert^2 dx\right)^{1/2}.
\end{equation}

\begin{theo}
\label{5-115Boundary}
Let \eqref{1-65} be satisfied. Let $U\in\mathcal{W}\left([-\lambda\rho_0,\lambda\rho_0];K_{\rho_0}\right)$ be a solution to \eqref{4i-65Boundary} satisfying \eqref{4ii-65Boundary} and \eqref{4iii-65Boundary}. Assume that $u$ satisfies \eqref{RobinBoundary}. There exist constants $\overline{s}_0\in (0,1)$ and $C\geq 1$ depending on $\lambda$, $\Lambda$ and $E$ only such that for every $0<r_0\leq \rho\leq \overline{s}_0 \rho_0$ the following inequality holds true

\begin{gather}
\label{SUCPBoundary}
\left\Vert U(\cdot,0) \right\Vert_{L^2\left(S_{\rho}\right)}\leq \frac{C\left(\rho_0\rho^{-1}\right)^{C}(H+e\varepsilon)}{\left(\widetilde{\theta}\log \left( \frac{H+e\varepsilon}{\varepsilon}\right)\right)^{1/6}} ,
\end{gather}
where
\begin{equation}
\label{theta-1new}
\widetilde{\theta}=\frac{\log (\rho_0/C\rho)}{\log (\rho_0/r_0)}.
\end{equation}
\end{theo}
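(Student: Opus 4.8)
The strategy is to reduce the Robin problem to a Neumann problem and then import the hyperbolic quantitative estimate of strong unique continuation at the boundary established in \cite{Ve3} (the estimate \eqref{SUCP-INTR2}), while being careful with the dependence of the various constants on $r_0$ and $\rho$.

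\emph{Step 1: Reduction to a Neumann condition.} Following the elliptic argument of \cite{Si}, I would introduce an auxiliary function $w$ defined on $K_{\rho_0}$ (time-independent) solving an appropriate boundary value problem for the \emph{elliptic} operator $\divrg(A(x)\nabla_x\cdot)$ with a boundary condition chosen so that $e^{w}$ (or a similar multiplier) transforms the Robin condition \eqref{RobinBoundary} into a homogeneous Neumann condition for the new unknown $V := e^{w} U$. Concretely, one wants $A\nabla_x w\cdot\nu = \gamma$ on $S_{\rho_0}$ together with suitable interior bounds; the Lipschitz regularity of $\gamma$ and the $C^{1,1}$ regularity of $\phi$ guarantee, via standard elliptic regularity, that $w\in C^{0,1}$ with $\|w\|$ controlled by $\bar\gamma$, $\lambda$, $\Lambda$, $E$. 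Since $w$ does not depend on $t$, the function $V$ still solves a wave equation $\partial_t^2 V - \divrg(\widetilde A\nabla_x V) + (\text{lower order}) = 0$ with a principal part of the same ellipticity class (the multiplication by $e^w$ produces first- and zeroth-order terms with bounded coefficients). One checks that $V$ satisfies the homogeneous conormal (Neumann) condition $\widetilde A\nabla_x V\cdot\nu = 0$ on $S_{\rho_0}\times(-\lambda\rho_0,\lambda\rho_0)$, and that the smallness hypothesis \eqref{4ii-65Boundary} and the normalization \eqref{4iii-65Boundary} are preserved up to multiplicative constants: $\sup_t\|V(\cdot,t)\|_{L^2(S_{r_0})}\le C\varepsilon$ and $\|V(\cdot,0)\|_{H^2(K_{\rho_0})}\le C(H+e\varepsilon)$ after the appropriate rescaling to $\rho_0=1$.

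\emph{Step 2: Apply the hyperbolic SUCP estimate at the boundary.} With $V$ now satisfying a homogeneous Neumann (conormal) condition, I would invoke the machinery of \cite{Ve3} — in the form of the three-cylinder/two-sphere–one-cylinder inequalities at the boundary and the associated frequency-function or Carleman estimates used there — to deduce an estimate of the shape
\[
\left\Vert V(\cdot,0) \right\Vert_{L^2\left(S_{\rho}\right)}\leq \frac{C\left(\rho_0\rho^{-1}\right)^{C}(H+e\varepsilon)}{\left(\widetilde{\theta}\,\log\!\left(\tfrac{H+e\varepsilon}{\varepsilon}\right)\right)^{1/6}}.
\]
Here the point is to track how the logarithmic weight degrades as the ``inner'' radius $r_0$ shrinks relative to the scale $\rho$ at which one evaluates the solution: iterating the boundary three-sphere inequality from radius $\sim r_0$ out to radius $\sim\rho$ costs a factor which, after optimization, produces exactly the exponent $\widetilde\theta$ in \eqref{theta-1new}, and the interpolation structure of the argument is responsible for the power $1/6$. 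Converting back, $U = e^{-w}V$ with $\|w\|_{L^\infty}$ bounded, so the same estimate holds for $U$ with adjusted constants, which is \eqref{SUCPBoundary}.

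\emph{Main obstacle.} The delicate point — and the reason \eqref{SUCP-INTR2} cannot simply be quoted verbatim — is the one flagged in the introduction: a naïve composition (pass from boundary data on $S_{r_0}$ to interior data on a half-ball via stability for the hyperbolic Cauchy problem, then apply \eqref{SUCP-INTR2}) yields a \emph{loglog} rate, because the Cauchy-data-to-interior step in the hyperbolic setting is itself only logarithmic. To get the single-log rate \eqref{SUCPBoundary} one must instead re-run the internal argument of \cite{Ve3} \emph{keeping the boundary localization throughout}, i.e. propagate smallness directly along the boundary surface $S_r$ (using the Neumann condition to close the Carleman/frequency estimates at the boundary) rather than detouring through the full interior. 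Handling the first- and zeroth-order perturbations introduced in Step 1 within those Carleman estimates, uniformly in the geometry governed by $E$, and bookkeeping the explicit $(\rho_0/\rho)^C$ and $\widetilde\theta$ dependence under the rescaling $\rho_0\mapsto 1$, is where the technical weight of the proof lies; this is carried out in Sections \ref{SUCP_Estimates} and \ref{AR}.
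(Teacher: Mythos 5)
There is a genuine gap, located at both of your steps. In Step 1, choosing $w$ so that only the boundary relation $A\nabla_x w\cdot\nu=\gamma$ holds and setting $V=e^{w}U$ does turn \eqref{RobinBoundary} into a Neumann condition, but, as you yourself note, it destroys the structure of the equation: $V$ solves a wave equation with first-- and zeroth--order terms. This is not a harmless nuisance to be ``handled within Carleman estimates'', because the method of \cite{Ve3} that both you and the paper rely on is not a Carleman/frequency-function argument applied to the wave operator: it requires the equation to remain exactly of the form $q(y)\partial^2_t u-\mathrm{div}\bigl(\tilde A(y)\nabla u\bigr)=0$, so that $u$ admits the spectral representation \eqref{3-71} in terms of the Dirichlet eigenfunctions $e_j$ and can be converted, through the mollified Boman transform and the functions $g_k(z\sqrt{\lambda_j})$, into the elliptic extension $v_k$ of \eqref{3-80}--\eqref{4-5-6-81}. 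Lower-order perturbations break the self-adjoint eigenfunction expansion and hence the whole transformation. The paper avoids this by Proposition \ref{solpos}: one constructs a solution $\psi\ge 1$, with $C^1$ bounds, of the \emph{full} elliptic Robin problem \eqref{positivesolution} (adapting \cite{Si}), and sets $u^\star=U/\psi$; the identity $\mathrm{div}\bigl(\psi^2A\nabla(U/\psi)\bigr)=\psi\,\mathrm{div}(A\nabla U)-U\,\mathrm{div}(A\nabla\psi)$ then shows that $u^\star$ solves \eqref{4i-65BoundaryN} with $A^\star=\psi^2A$, $q=\psi^2$ and \emph{no} lower-order terms, while the Robin condition becomes the homogeneous Neumann condition \eqref{NeumannBoundary}. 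Your reduction would have to be replaced by this one (or you would have to rebuild the entire machinery for operators with lower-order terms, which you do not do).

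In Step 2 the central quantitative mechanism is missing: you correctly identify the loglog obstruction, but ``re-run the internal argument of \cite{Ve3} keeping the boundary localization throughout'' is a description of the goal, not a method, and the three-sphere-at-the-boundary iteration you invoke needs smallness on an open set or interior Cauchy data, which is exactly what is not available here (only the trace on $S_{r_0}\times J$ is small, the conormal trace vanishing by the boundary condition). The paper's new ingredient is the following: after even reflection and the choice of \emph{even} eigenfunctions (Proposition \ref{autofpari}), one projects $W_k(y',0,t,z)$ of \eqref{Wk} against an arbitrary $\Phi\in L^2(B'_{r_0})$, obtaining a function $w_k(t,z)$ of \emph{two} variables \eqref{wk} which solves the elliptic equation \eqref{0-89} with the Cauchy data \eqref{1-89}--\eqref{2-89}: $|w_k(t,0)|\le\epsilon\|\Phi\|_{L^2(B'_{r_0})}$ and $\partial_z w_k(t,0)=0$. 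Two-dimensional stability for this Cauchy problem gives a \emph{H\"older} (not logarithmic) propagation of smallness in $z$ up to $|z|\le r_0/8$, and a duality argument then bounds $\|v_k(\cdot,0,z)\|_{L^2(B'_{r_0})}$ by $Cr_0^{-1/2}\sigma_k$. Only at this point does one solve the Cauchy problem \eqref{4-5-6-90} for $v_k$ (whose Neumann datum on the flat part vanishes thanks again to the evenness of the $e_j$) and obtain \eqref{fine}, after which the iteration of \cite{Ve3} from its Theorem 3.7 onward yields \eqref{SUCPBoundary} with the stated $\widetilde\theta$ and the power $1/6$. Without this projection-plus-duality device, or a concrete substitute for it, your outline does not produce the single-log estimate.
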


From now on we shall refer to the a priori data as the following set of quantities: $\lambda, \Lambda, \rho_0, E, \bar{\gamma}$.

\section{Proof of Theorem \ref{5-115Boundary}} \label{SUCP_Estimates}

In what follows we use the following
\begin{prop}\label{solpos}
There exists a radius $r_1>0$ depending on the a priori data only, such that the problem
\begin{equation}\label{positivesolution}
\left\{
\begin{array}
{lcl}
\mbox{div}(A \nabla \psi)=0\ ,& \mbox{in $K_{r_1}$ ,}
\\
A\nabla \psi\cdot\nu + \gamma\psi=0 \ ,& \mbox{in $S_{r_1}$ ,}
\end{array}
\right.
\end{equation}
admits a solution $\psi \in H^1(K_{r_1})$ satisfying
\begin{eqnarray}\label{LB}
\psi(x)\ge 1 \ \ \mbox{for every}\ \ x\in K_{r_1}.
\end{eqnarray}
Moreover, there exists a constant $\bar{\psi}>0$ depending on the a priori data only, such that
\begin{eqnarray}\label{UB}
\| \psi\|_{C^1(K_{r_1})}\le \bar{\psi}\ .
\end{eqnarray}
\end{prop}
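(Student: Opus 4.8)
The plan is to realize $\psi$ as the solution of a mixed boundary value problem on $K_{r_1}$ and then to normalize it. Write $\Gamma_{r_1}:=\partial K_{r_1}\setminus S_{r_1}$ for the spherical portion of the boundary, and look for $\psi$ of the form $\psi=1+w$ with $w$ in the closed subspace $V:=\{v\in H^1(K_{r_1}):v=0\ \text{on}\ \Gamma_{r_1}\}$ solving, weakly,
\[
\int_{K_{r_1}}A\nabla w\cdot\nabla v\,dx+\int_{S_{r_1}}\gamma\,w\,v\,d\sigma=-\int_{S_{r_1}}\gamma\,v\,d\sigma,\qquad\forall\,v\in V.
\]
This means $\psi$ solves $\mbox{div}(A\nabla\psi)=0$ in $K_{r_1}$, the Robin condition of \eqref{RobinBoundary} (time-independent) on $S_{r_1}$, and $\psi=1$ on $\Gamma_{r_1}$; the value prescribed on $\Gamma_{r_1}$ is auxiliary and is irrelevant to \eqref{positivesolution}. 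Note that no sign assumption on $\gamma$ is available, so the construction will rely on the smallness of the domain rather than on a maximum principle.

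\emph{First step: existence and energy estimate.} Functions of $V$ vanish on $\Gamma_{r_1}$, a portion of $\partial K_{r_1}$ of positive surface measure whose shape is controlled by $E$; hence the Poincar\'e inequality $\|v\|_{L^2(K_{r_1})}\le C\,r_1\,\|\nabla v\|_{L^2(K_{r_1})}$ holds with $C$ depending on $E$ only (the factor $r_1$ being dictated by scaling). Combined with the trace inequality $\|v\|_{L^2(S_{r_1})}^2\le C\big(r_1^{-1}\|v\|_{L^2(K_{r_1})}^2+\|v\|_{L^2(K_{r_1})}\|\nabla v\|_{L^2(K_{r_1})}\big)$ this gives $\big|\int_{S_{r_1}}\gamma v^2\,d\sigma\big|\le C\,\bar\gamma\,r_1\,\|\nabla v\|_{L^2(K_{r_1})}^2$. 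Therefore, choosing $r_1$ small depending on $\lambda,\Lambda,E,\bar\gamma$ (and $r_1\le\rho_0$, so that $A,\gamma,\phi$ are defined on $K_{r_1}$), the bilinear form above is bounded and coercive on $V$, and Lax--Milgram yields a unique $w\in V$ with $\|w\|_{H^1(K_{r_1})}\le C\,\|\gamma\|_{L^2(S_{r_1})}$.

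\emph{Second step: lower bound and $C^1$ estimate.} After the dilation $x=r_1 y$, the function $w$ solves a uniformly elliptic equation on a domain of unit size with a Robin-type condition whose datum is $-r_1\gamma$, of size $O(r_1\bar\gamma)$; the De Giorgi--Nash--Moser estimate for mixed oblique derivative problems then gives $\|w\|_{L^\infty(K_{r_1})}\le C\,r_1\bar\gamma\le\tfrac12$ after a further smallness choice of $r_1$ in terms of the a priori data. Hence $\psi=1+w\ge\tfrac12$ on $K_{r_1}$; since \eqref{positivesolution} is linear and homogeneous, $2\psi$ is again a solution and satisfies $2\psi\ge1$, which yields \eqref{LB} after renaming $\psi:=2\psi$. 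For \eqref{UB}, note that $\psi$ solves $\mbox{div}(A\nabla\psi)=0$ with $A$ Lipschitz, across the $C^{1,1}$ surface $S_{r_1}$, under a Robin condition with Lipschitz coefficient $\gamma$; running the construction on the slightly larger region $K_{2r_1}$ (still contained in $K_{\rho_0}$) and then restricting to $K_{r_1}$, one stays away from $\Gamma_{2r_1}$ and near the relative interior of $S_{2r_1}$, so interior estimates together with boundary Schauder-type estimates for the oblique derivative problem give $\psi\in C^{1,\beta}(\overline{K_{r_1}})$ for some $\beta\in(0,1)$ with norm bounded by the a priori data; in particular \eqref{UB} holds.

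The main point requiring care is the $L^\infty$ bound on $w$ with the correct rate in $r_1$: one must exploit that the smallness of the domain turns the order-one Robin datum $\gamma$ into an $O(r_1)$ perturbation, so that both the strict positivity of $\psi$ after normalization and the coercivity of the bilinear form follow from a single smallness requirement on $r_1$. The other ingredient, $C^{1,\beta}$ regularity up to a $C^{1,1}$ boundary for a Robin problem with merely Lipschitz coefficients, is classical (De Giorgi--Nash--Moser followed by boundary Schauder estimates for oblique derivative problems).
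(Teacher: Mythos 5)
Your argument is correct in substance, but it takes a genuinely different route from the paper. The paper's proof is constructive: it flattens $S_{\rho_0}$ by $\Psi(y',y_n)=(y',y_n+\phi(y'))$, freezes the coefficients at the origin and, after a linear change of variables $L$ reducing $A(0)$ to the identity, exhibits the explicit positive solution $8\,e^{-c\gamma_0'\xi_n}\cos(c\gamma_0'\xi_1)$ of the constant-coefficient Robin problem; the actual solution is then sought in the perturbed form $\psi'=\psi_0-s$, where $s$ solves \eqref{esse} and, as in Claim 4.4 of \cite{Si}, satisfies $s(y)=\mathcal{O}(|y|^2)$ near the origin, so positivity and the $C^1$ bound survive on a ball whose radius is chosen explicitly in terms of $\lambda$ and $\bar\gamma$. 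You instead set $\psi=1+w$ and solve a variational mixed Dirichlet--Robin problem, getting coercivity and the smallness $\|w\|_{L^\infty(K_{r_1})}=\mathcal{O}(\bar\gamma r_1)$ from the smallness of the scale (Poincar\'e plus scaled trace inequality, then a global boundedness estimate for the rescaled mixed problem), and you recover \eqref{UB} by performing the construction on $K_{2r_1}$ and using interior plus conormal $C^{1,\beta}$ estimates away from the Dirichlet portion. Both proofs exploit the same mechanism --- at small scale the sign-indefinite Robin term is a perturbation --- but yours replaces the explicit barrier of \cite{Si}, \cite{GaSi} by standard elliptic machinery (Lax--Milgram, De Giorgi--Nash--Moser/Stampacchia boundedness for mixed problems, boundary Schauder for the conormal problem with Lipschitz $A$, $\gamma$ and $C^{1,1}$ boundary), which makes it independent of \cite{Si} at the price of invoking regularity theory the paper avoids; the dependence of $r_1$ on $\lambda,\Lambda,E,\rho_0,\bar\gamma$ is the same. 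If you write it up, make two points explicit: the rate $\mathcal{O}(\bar\gamma r_1)$ requires, besides the rescaled Robin datum $r_1\gamma$, the energy bound $\|w\|_{L^2(K_{r_1})}\le C\bar\gamma r_1^{\frac{n}{2}+1}$, so that after dilation both inputs of the boundedness estimate are $\mathcal{O}(\bar\gamma r_1)$; and the $L^\infty$ estimate should be the global one for the mixed problem, obtained with truncations $(w-k)^{+}$, which vanish on $\Gamma_{r_1}$ and are therefore admissible test functions, so that no regularity is needed at the junction between the Dirichlet and Robin portions of $\partial K_{r_1}$.
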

\begin{proof} See Section \ref{AR}
\end{proof}
Let $r_1$ and $\psi$ be the radius and the function introduced in Proposition \ref{solpos}.
Denoting with
\begin{eqnarray}
u^\star=\frac{U}{\psi}
\end{eqnarray}
it follows that $u^\star\in \mathcal{W}\left([-\lambda r_1,\lambda r_1];K_{r_1}\right)$ is a solution to

\begin{equation} \label{4i-65BoundaryN}
\psi^2(x)\partial^2_{t}u^\star-\mbox{div}\left(A^\star(x)\nabla_x u^\star\right)=0, \quad \hbox{in } K_{r_1}\times(-\lambda r_1,\lambda r_1),
\end{equation}
satisfying the following Neumann condition
\begin{equation} \label{NeumannBoundary}
A^\star\nabla_x u^\star\cdot \nu =0, \quad \hbox{on } S_{r_1}\times(-\lambda r_1,\lambda r_1),
\end{equation}
where $\nu$ denotes the outer unit normal to $S_{r_1}$ and $A^\star(x)=\psi^2(x)A(x)$.
Repeating the arguments in \cite[Subsection 3.2]{Ve3} (partly based on the techniques introduced in \cite{AE}), we can assume with no loss of generality that $A^\star(0)=I$ with $I$ identity matrix $n\times n$ and we  infer that there exist $\rho_1, \rho_2$ and a function $\phi\in C^{1,1}(\overline{B}_{\rho_2}, \mathbb{R}^n)$ such that

\begin{subequations}
\label{Phi}
\begin{equation}
\label{Phia}
\Phi(B_{\rho_2})\subset B_{\rho_1}
\end{equation}
\begin{equation}
\label{Phib}
\Phi(y,0)=(y',\phi(y'))\ .
\end{equation}
\begin{equation}
\label{Phic}
C^{-1}\leq |\textrm{det}D\Phi(y)|\leq C,\quad\hbox{ for every }  y\in B_{\rho_2}.
\end{equation}

\end{subequations}

Let us define the matrix $\overline{A}(y)=\{\overline{a}(y)\}_{i,j=1}^n$ as follows (below $(D\Phi^{-1})^{tr}$ denotes the transposed matrix of  $(D\Phi^{-1})$)

\begin{equation*}
\overline{A}(y)=|\textrm{det}D\Phi(y)|(D\Phi^{-1})(\Phi(y)) A^\star(\Phi(y))(D\Phi^{-1})^{tr}(\Phi(y)),
\end{equation*}
\begin{equation}
z(y,t)=u^\star(\Phi(y),t)\
\end{equation}

\begin{equation}\label{defu}
u(y,t)=z(y',|y_n|,t)\
\end{equation}
and hence we get that $u$ is a solution to
\begin{equation} \label{4i-65Ball}
q(y)\partial^2_{t}u-\mbox{div}\left(\tilde {A}(y)\nabla{u}\right)=0, \quad \hbox{in } B_{\rho_2}\times(-\lambda \rho_2,\lambda \rho_2),
\end{equation}
where for every $y\in B_{\rho_2}$ we denote
\begin{equation*}
{q}(y)=|\textrm{det}D\Phi(y',|y_n|)| \psi^2(y',|y_n|),
\end{equation*}
and $\tilde A(y)=\{\tilde a_{ij}(y)\}_{i,j=1}^n$ is the matrix whose entries are given by
\begin{subequations}
\label{Atilde}
\begin{equation}
\label{Atildeprimoblocco}
\tilde a_{ij}(y',y_n)=\overline{a}_{ij}(y',|y_n|),\quad\hbox{ if either}  i,j\in\{1,\ldots,n-1\}\hbox{, or }i=j=n,
\end{equation}
\begin{equation}
\label{Atildesecondoblocco}
\tilde a_{nj}(y',y_n)=\tilde a_{jn}(y',y_n)=\textrm{sgn}(y_n)\overline{a}^{nj}(y',|y_n|),\quad\hbox{ if }  1\leq j\leq n-1.
\end{equation}
\end{subequations}

From \eqref{1-65a}, \eqref{2-65}, \eqref{Phic}, \eqref{LB} and \eqref{UB} there exist constants $\tilde{\Lambda},\tilde{\lambda}>0$ depending on the a priori data only such that

\begin{subequations}
\label{4-65}
\begin{equation}
\label{4-65a}
\tilde{\lambda}\left\vert\xi\right\vert^2\leq \tilde{A}(y)\xi\cdot\xi\leq\tilde{\lambda}^{-1}\left\vert\xi\right\vert^2, \quad \hbox{for every } y\in B_{\rho_2}, \xi\in\mathbb{R}^n,
\end{equation}
\begin{equation}
\label{4-65b}
\left\vert\tilde{A}(y_1)-\tilde{A}(y_2)\right\vert\leq\frac{\tilde{\Lambda}}{\rho_0} \left\vert y_1-y_2 \right\vert, \quad \hbox{for every } y_1,y_2\in B_{\rho_2}
\end{equation}
\end{subequations}
and

\begin{subequations}
 \label{3-65a}
 \begin{equation} \label{3-65aa}
\tilde{\lambda}\leq q(y)\leq\tilde{\lambda}^{-1}, \quad \hbox{for every } y\in B_{\rho_2}\ ,
\end{equation}
\begin{equation}
\label{3-65ab}
\left\vert q(y_1)-q(y_2)\right\vert\leq\frac{\tilde{\Lambda}}{\rho_0} \left\vert y_1-y_2 \right\vert, \quad \hbox{for every } y_1,y_2\in B_{\rho_2}.
\end{equation}
\end{subequations}

Let us recall that, by construction, the function $u$ in \eqref{defu} is even w.r.t. the variable $y_n$ and moreover with no loss of generality we may assume that $u$ (up to replacing it with its even part w.r.t the variable $t$ as in \cite{Ve3}) is even w.r.t. $t$ also. From now for the sake of simplicity we shall assume that $\rho_2=1$.

By \eqref{4ii-65Boundary} and by \eqref{4iii-65Boundary} we have that there exist $C_1,C_2>0$ constants depending on the a priori data only such that
\begin{equation} \label{4iv-65Boundary}
\epsilon=\sup_{t\in (-\lambda,\lambda)}\left(\int_{B'_{r_0}}u^2(y',0,t)dy'\right)^{1/2}\le C_1\varepsilon
\end{equation}
\begin{equation} \label{4v-65Boundary}
H_1=\left(\sum_{j=0}^2\int_{B_1}\left\vert D_x^ju(y,0)\right\vert^2 dy\right)^{1/2}\le C_2 H
\end{equation}

As in \cite{Ve3}, let $\widetilde{u}_0$ be an even extension w.r.t. $y_n$ of the function $u_0:=u(\cdot,0)$ such that $\widetilde{u}_0\in H^2\left(B_2\right)\cap H_0^1\left(B_2\right)$ and

\begin{equation} \label{2-70}
\|\widetilde{u}_0\|_{H^2\left(B_2\right)}\leq CH_1,
\end{equation}
where $C$ is an absolute constant.

Let us denote by $\lambda_j$, with $0<\lambda_1\leq\lambda_2\leq \cdots\leq\lambda_j\leq\cdots$ the eigenvalues associated to the Dirichlet problem

\begin{equation}
\label{1-71}
\left\{\begin{array}{ll}
\mbox{div}\left(\tilde{A}(y)\nabla_y v\right)+\omega q(y)v=0, & \textrm{in }B_2,\\[2mm]
v\in H_0^1\left(B_2\right)\ .
\end{array}\right.
\end{equation}
and by $e_j(\cdot)$ the corresponding eigenfunctions normalized by

\begin{equation} \label{2-71}
\int_{B_2}e^2_j(y)q(y)dy=1.
\end{equation}

Let us stress that we may choose the eigenfunctions $e_j$ to be even w.r.t $y_n$ (see Remark \ref{autofpari} in Section \ref{AR}).
By \eqref{1-65a}, \eqref{3-65a} and Poincar\'{e} inequality we have for every $j\in\mathbb{N}$

\begin{gather}
 \label{2-71NEW}
\lambda_j=\int_{B_2} \tilde{A}(y)\nabla_x e_j(y)\cdot \nabla_y e_j(y) dy\geq c\lambda^2 \int_{B_2}e^2_j(y)q(y)dy=c\lambda^2
\end{gather}
where $c$ is an absolute constant.
Denote by

\begin{equation} \label{4-71}
\alpha_j :=\int_{B_2}\widetilde{u}_0(y) e_j(y)q(y)dy,
\end{equation}
and let

\begin{equation} \label{3-71}
\widetilde{u}(y,t):=\sum_{j=1}^{\infty}\alpha_j e_j(y)\cos\sqrt{\lambda_j} t.
\end{equation}
By Proposition 3.3 in \cite{Ve3} we have that
\begin{eqnarray}\label{bound}
\sum_{j=1}^{\infty}(1+\lambda_j)^2 \alpha^2_j\le C H_1^2 \ ,
\end{eqnarray}
where $C>0$ depends on $\tilde{\lambda}$ and $\tilde{\Lambda}$ only.

Moreover, as a consequence of the uniqueness for the Cauchy problem for the equation \eqref{4i-65Ball} (see $(3.9)$ in \cite{Ve3} for a detailed discussion) we have that
\begin{equation}\label{CP}
\tilde{u}(y,t)=u(y,t)\ \ \ \mbox{for}\ \ |y|+\tilde{\lambda}^{-1}|t|<1 \  .
\end{equation}

 We define for any $\mu \in (0,1]$ and for any $k\in \mathbb{N}$ the following mollified form of the Boman transformation of $\widetilde{u}(y,\cdot)$ \cite{Bo}
\begin{equation}
\label{2-76}
\widetilde{u}_{\mu,k}(x)=\int_{\mathbb{R}}\widetilde{u}(x,t)\varphi_{\mu,k}(t)dt \mbox{, for } x\in B_2.
\end{equation}
where $\{\varphi_{\mu,k}\}_{k=1}^{\infty}$ is a suitable sequence of mollifiers, \cite[Section 3.1]{Ve3}, such that $\mbox{supp }\varphi_{\mu,k}\subset\left[-\frac{\lambda(\mu+1)}{4},\frac{\lambda(\mu+1)}{4}\right]$, $\varphi_{\mu,k}\geq 0$,  $\varphi_{\mu,k}$ even function and such that $\int_{\mathbb{R}}\varphi_{\mu,k}(t)dt=1$.

From now on we fix $\overline{\mu}:=k^{-\frac{1}{6}}$ for $k\geq 1$ and we denote

\begin{equation}
\label{u-k}
\widetilde{u}_k:=\widetilde{u}_{\overline{\mu},k}.
\end{equation}

By Proposition 3.3 im \cite{Ve3}, it follows that

\begin{equation}
\label{2'-97}
\left\Vert u(\cdot,0)-\widetilde{u}_{\mu,k} \right\Vert_{L^2 \left(B_{1}\right)}\leq C H k^{-1/6},
\end{equation}

Let
\begin{equation*}
\label{2-74}
\widehat{\varphi}_{\overline{\mu},k}(\tau)=\int_{\mathbb{R}}\varphi_{\overline{\mu},k}(t)e^{-i\tau t}dt=\int_{\mathbb{R}}\varphi_{\overline{\mu},k}(t) \cos \tau t dt \mbox{, } \tau\in\mathbb{R}.
\end{equation*}
Let us introduce now, for every $k\in \mathbb{N}$ an even function $g_k\in C^{1,1}(\mathbb{R})$ such that if $|z|\leq k$ then we have $g_k(z)=\cosh z$, if  $|z|\geq 2k$ then we have $g_k(z)=\cosh 2k$ and such that it satisfies the condition

\begin{equation}
\label{2-80}
\left\vert g_k(z) \right\vert+\left\vert g^{\prime}_k(z) \right\vert+\left\vert g^{\prime\prime}_k(z) \right\vert\leq ce^{2k} \mbox{, for every } z\in\mathbb{R},
\end{equation}
where $c$ is an absolute constant.

\bigskip
Let us introduce the following quantities
\begin{subequations}
\label{88}
\begin{equation}
\label{7-81}
h_k(z)=e^{2k}\min\left\{1,\left(4\pi\lambda^{-1}|z|\right)^{2k}\right\}\ \ , \ \ z\in\mathbb{R}
\end{equation}
\begin{equation}
\label{5-81}
f_{k}(y,z)=\sum_{j=1}^{\infty}\lambda_j\alpha_j \widehat{\varphi}_{\overline{\mu},k}\left(\sqrt{\lambda_j}\right)\left(g^{\prime\prime}_k\left(z\sqrt{\lambda_j}\right)-
g_k\left(z\sqrt{\lambda_j}\right)\right)e_j(y)\ ,\   y\in B_2\ ,z\in \mathbb{R},
\end{equation}
\begin{equation}
\label{6-81}
F_{k}(y,t,z)=\sum_{j=1}^{\infty}\alpha_j\sqrt{\lambda_j}\gamma_k(z\sqrt{\lambda_j})\sin(\sqrt{\lambda_j}t)e_j(y)\ , \ y\in B_2\ , t,z\in \mathbb{R},
\end{equation}
\begin{equation}
\label{8-81}
\gamma_k(z\sqrt{\lambda_j})= g^{\prime\prime}_k(z\sqrt{\lambda_j})- g_k(z\sqrt{\lambda_j}), \ \ z\in\mathbb{R}.
\end{equation}
\end{subequations}

\begin{prop}\label{2-81prop}
Let
\begin{equation}
\label{3-80}
v_{k}(y,z):=\sum_{j=1}^{\infty}\alpha_j \widehat{\varphi}_{\overline{\mu},k}\left(\sqrt{\lambda_j}\right)g_k\left(y\sqrt{\lambda_j}\right)e_j(z) \mbox{ , for  } (y,z)\in B_2\times\mathbb{R}.
\end{equation}
We have that $v_{k}(\cdot,z)$ belongs to $H^2\left(B_2\right)\cap H_0^1\left(B_2\right)$ for every $y\in \mathbb{R}$, $v_{k}(y,z)$ is an even function with respect to $z$ and it satisfies
\begin{equation}
\label{4-5-6-81}
\left\{\begin{array}{ll}
q(y)\partial^2_{z}v_{k}+\mbox{div}\left(\tilde{A}(y)\nabla_x v_{k}\right)=f_{k}(y,z), \quad \hbox{in } B_2\times \mathbb{R},\\[2mm]
v_{k}(\cdot,0)=\widetilde{u}_{k},\quad \hbox{in } B_2.
\end{array}\right.
\end{equation}

Moreover we have

\begin{equation}
\label{3-81}
\sum_{j=0}^{2}\|\partial^{j}_yv_{k}(\cdot,z)\|_{H^{2-j}\left(B_2\right)}\leq CH e^{2k} \mbox{, for every  } z\in \mathbb{R},
\end{equation}

\begin{equation}
\label{2-81}
\|f_{k}(\cdot,z)\|_{L^2\left(B_2\right)}\leq CH e^{2k}\min\left\{1,\left(4\pi\lambda^{-1}|z|\right)^{2k}\right\} \mbox{, for every  } z\in \mathbb{R},
\end{equation}

\begin{equation}
\label{4-81}
\|F_{k}(\cdot,0,t,z)\|_{H^{\frac{1}{2}}\left(B^{\prime}_1\right)}\leq C H_1 h_k(z)\mbox{, for every  } t,z\in \mathbb{R},
\end{equation}

 where
 $C$ depends on $\tilde{\lambda}$ and $\Lambda$ only.

\end{prop}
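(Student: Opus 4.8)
The plan is to verify each of the four estimates in turn, working directly from the spectral expansions and the uniform bounds already available, namely \eqref{bound} on the coefficients $\alpha_j$, the lower eigenvalue bound \eqref{2-71NEW}, the coefficient inequality $\sum_j (1+\lambda_j)^2\alpha_j^2\le CH_1^2$, and the growth control \eqref{2-80} on $g_k$ together with its construction ($g_k(z)=\cosh z$ for $|z|\le k$, constant for $|z|\ge 2k$). First I would check that $v_k$ solves the stated problem: differentiating \eqref{3-80} twice in $z$ gives, for $|z\sqrt{\lambda_j}|\le k$, the term $\lambda_j\alpha_j\widehat\varphi_{\overline\mu,k}(\sqrt{\lambda_j})\cosh(z\sqrt{\lambda_j})e_j$, while applying $\mbox{div}(\tilde A\nabla\cdot)$ to $e_j$ produces $-\lambda_j q\, e_j$ by the eigenvalue equation \eqref{1-71}; since $g_k''=g_k$ exactly on $\{|z\sqrt{\lambda_j}|\le k\}$ and $g_k''-g_k=:\gamma_k$ off that set, the two contributions combine into $q\partial_z^2 v_k+\mbox{div}(\tilde A\nabla v_k)=\sum_j\lambda_j\alpha_j\widehat\varphi_{\overline\mu,k}(\sqrt{\lambda_j})(g_k''-g_k)(z\sqrt{\lambda_j})e_j=f_k$, which is exactly \eqref{5-81}. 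The initial condition $v_k(\cdot,0)=\widetilde u_k$ follows since $g_k(0)=\cosh 0=1$, so $v_k(y,0)=\sum_j\alpha_j\widehat\varphi_{\overline\mu,k}(\sqrt{\lambda_j})e_j(y)$, and this equals $\widetilde u_{\overline\mu,k}$ by comparing with \eqref{3-71}, \eqref{2-76} and the fact that $\widehat\varphi_{\overline\mu,k}(\sqrt{\lambda_j})=\int\varphi_{\overline\mu,k}(t)\cos\sqrt{\lambda_j}t\,dt$. Evenness of $v_k$ in $z$ is immediate since $g_k$ is even; membership in $H^2(B_2)\cap H_0^1(B_2)$ follows from the $H^2$-bound below, the fact that each $e_j\in H_0^1(B_2)$, and elliptic regularity up to the boundary.

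Next I would prove the three norm estimates. For \eqref{3-81}, I would expand $\partial_y^j v_k(\cdot,z)$ in the $e_j$ basis; using that $\{e_j\}$ is orthonormal with weight $q$ and that $\|e_j\|_{H^{2-j}}^2$ is controlled by $(1+\lambda_j)^{2-j}$ up to constants depending on $\tilde\lambda,\tilde\Lambda$ (standard elliptic estimates applied to \eqref{1-71}), each norm is bounded by $\big(\sum_j (1+\lambda_j)^{2}\alpha_j^2\,\widehat\varphi_{\overline\mu,k}(\sqrt{\lambda_j})^2\,|g_k^{(\ell)}(z\sqrt{\lambda_j})|^2\big)^{1/2}$ for the appropriate derivative count, which by $|\widehat\varphi_{\overline\mu,k}|\le 1$, by \eqref{2-80} giving $|g_k^{(\ell)}|\le ce^{2k}$, and by \eqref{bound} yields $CHe^{2k}$. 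The estimate \eqref{2-81} for $f_k$ is similar but uses the key point that the summand carries an extra factor $\lambda_j$, so I need $\sum_j\lambda_j^2\alpha_j^2<\infty$, again from \eqref{bound}; the $\min\{1,(4\pi\lambda^{-1}|z|)^{2k}\}$ factor comes from estimating $\widehat\varphi_{\overline\mu,k}(\sqrt{\lambda_j})$: since $\mbox{supp}\,\varphi_{\overline\mu,k}\subset[-\lambda(\overline\mu+1)/4,\lambda(\overline\mu+1)/4]\subset[-\lambda/2,\lambda/2]$ and $\gamma_k(z\sqrt{\lambda_j})$ vanishes unless $|z|\sqrt{\lambda_j}>k$, one uses that for such $j$ the mollifier's smoothness forces rapid decay of $\widehat\varphi_{\overline\mu,k}(\sqrt{\lambda_j})$ — quantitatively $|\widehat\varphi_{\overline\mu,k}(\tau)|\le(4\pi|\tau|^{-1}\cdot\text{something})$ from integrating by parts $2k$ times against the $k$-fold convolution structure of $\varphi_{\overline\mu,k}$, exactly as in \cite[Section 3.1]{Ve3} — yielding the geometric-in-$z$ bound after combining with $|g_k''-g_k|\le ce^{2k}$ and summing. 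For \eqref{4-81}, I would expand $F_k(\cdot,0,t,z)$ as in \eqref{6-81} restricted to $B_1'$, use the trace estimate $\|e_j(\cdot,0)\|_{H^{1/2}(B_1')}\le C(1+\lambda_j)^{1/2}$ together with the extra $\sqrt{\lambda_j}$ in the summand, so that the relevant coefficient square-sum is $\sum_j\lambda_j(1+\lambda_j)|\alpha_j|^2\,|\gamma_k(z\sqrt{\lambda_j})|^2\le Ce^{4k}\min\{1,(4\pi\lambda^{-1}|z|)^{4k}\}\sum_j(1+\lambda_j)^2\alpha_j^2$, and taking square roots gives $CH_1 h_k(z)$ with $h_k$ as in \eqref{7-81}; the $|\sin\sqrt{\lambda_j}t|\le 1$ bound makes the estimate uniform in $t$.

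The main obstacle is the sharp bound on $\widehat\varphi_{\overline\mu,k}(\sqrt{\lambda_j})$ on the range where $\gamma_k(z\sqrt{\lambda_j})\ne 0$, i.e. where $z\sqrt{\lambda_j}$ is large: one must quantify how the $k$-fold mollification structure of $\varphi_{\overline\mu,k}$ (with $\overline\mu=k^{-1/6}$) produces a decay of the form $(C|z|)^{2k}$ that beats the exponential growth $e^{2k}$ of $g_k''-g_k$ when $|z|$ is small, giving the product bound $e^{2k}\min\{1,(4\pi\lambda^{-1}|z|)^{2k}\}$ uniformly in $k$. This is precisely the role of the Boman transform mollifiers and is carried out in \cite[Section 3.1]{Ve3}; I would invoke those properties rather than reprove them, citing the explicit decay estimate for $\widehat\varphi_{\overline\mu,k}$ established there. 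The remaining bookkeeping — tracking whether constants depend on $\tilde\lambda,\tilde\Lambda$ or become absolute after the normalization $A^\star(0)=I$ and $\rho_2=1$ — is routine given \eqref{4-65} and \eqref{3-65a}, and the passage between $H$ and $H_1$ is handled by \eqref{4v-65Boundary} and \eqref{2-70}.
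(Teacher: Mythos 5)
For \eqref{4-5-6-81}, \eqref{3-81} and \eqref{2-81} your plan is essentially the paper's: the paper simply invokes Proposition 3.4 of \cite{Ve3}, and your direct spectral verification (eigenvalue equation for the $\mbox{div}(\tilde A\nabla\cdot)$ part, $g_k''-g_k$ producing $f_k$, $g_k(0)=1$ giving $v_k(\cdot,0)=\widetilde u_k$, and the mollifier decay from \cite[Section 3.1]{Ve3} producing the factor $\min\{1,(4\pi\lambda^{-1}|z|)^{2k}\}$ in \eqref{2-81}) is the content of that reference. One caveat: in \eqref{3-81} you cannot bound the $H^1$ and $H^{2}$ norms by the square root of a diagonal coefficient sum "using that $\{e_j\}$ is orthonormal with weight $q$", because the $e_j$ are orthogonal only in $L^2(q\,dy)$ and in the energy form $\int_{B_2}\tilde A\nabla\cdot\nabla$, not in $H^2$; the gradient bound should come from the energy form (diagonal via \eqref{1-71}) and the second-order bound from elliptic regularity applied to $\mbox{div}(\tilde A\nabla v_k(\cdot,z))=-q\sum_j\lambda_j\alpha_j\widehat\varphi_{\overline\mu,k}(\sqrt{\lambda_j})g_k(z\sqrt{\lambda_j})e_j$, whose $L^2$ norm is diagonal. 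This is fixable and is how \cite{Ve3} proceeds.

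The genuine gap is in your treatment of \eqref{4-81}, which is exactly the part the paper proves anew. You apply the trace estimate $\|e_j(\cdot,0)\|_{H^{1/2}(B^{\prime}_1)}\le C(1+\lambda_j)^{1/2}$ mode by mode and then pass to the square root of the diagonal sum $\sum_j\lambda_j(1+\lambda_j)\alpha_j^2|\gamma_k(z\sqrt{\lambda_j})|^2$; but the traces $e_j(\cdot,0)$ are not orthogonal in $H^{1/2}(B^{\prime}_1)$ (nor in $L^2(B^{\prime}_1)$), so this passage is unjustified. The obvious repair (triangle inequality plus Cauchy--Schwarz against \eqref{bound}) leaves the weight $\sum_j\lambda_j/(1+\lambda_j)$ over the infinitely many modes with $\gamma_k(z\sqrt{\lambda_j})\neq0$, which diverges, so the argument does not close with only $\sum_j(1+\lambda_j)^2\alpha_j^2\le CH_1^2$ available. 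The paper's device is precisely designed to avoid this: it first bounds $\|F_k(\cdot,t,z)\|_{H^1(B_2)}$ --- the $L^2$ part by the weighted-$L^2$ orthonormality \eqref{2-71}, and the gradient part by testing the weak form of \eqref{1-71} with $F_k$ itself, which yields the exact identity $\int_{B_2}\tilde A\nabla_yF_k\cdot\nabla_yF_k=\sum_j\alpha_j^2\lambda_j^2\sin^2(\sqrt{\lambda_j}t)\gamma_k^2(z\sqrt{\lambda_j})$ with no off-diagonal loss --- and only then applies the trace theorem once, to $F_k$ as a whole. A further point to watch: your mechanism for the $\min\{1,(4\pi\lambda^{-1}|z|)^{2k}\}$ factor is the decay of $\widehat\varphi_{\overline\mu,k}$, which is legitimate for $f_k$ in \eqref{5-81} but not for $F_k$ in \eqref{6-81}, since $F_k$ contains no $\widehat\varphi$ factor; at this point the paper imports from \cite{Ve3} the pointwise bound $|\gamma_k(z\sqrt{\lambda_j})|\le ch_k(z)$, and that is the input you must quote (and check in \cite{Ve3}) rather than the mollifier decay.
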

\begin{proof}
Except for the inequality \eqref{4-81} which is discussed below, the proofs of the remaining results follow along the lines of  Proposition 3.4 in \cite{Ve3}.
From the arguments in Proposition 3.4 in \cite{Ve3} we deduce that
\begin{eqnarray}
|\gamma_k(z\sqrt{\lambda_j})|\le c h_k(z)
\end{eqnarray}
where $c>0$ is an absolute constant constant, which in turn implies that
\begin{eqnarray}\label{l2}
\|F_{k}(\cdot,0,t,z)\|_{L^{2}\left(B_2\right)}\leq c h_k^2 \sum_{j=1}^{\infty}\alpha_j^2\lambda_j\le C {H^2_1}{h^2_k}(z)
\end{eqnarray}
with $C>0$ constant depending on $\tilde{\lambda}$.

From \eqref{4-65a} we have
\begin{eqnarray}\label{seminorma}
&&\tilde{\lambda}\int_{B_2}|\nabla_y F_k(y,t,z)|^2 \mbox{d}y\le \int_{B_2} \tilde{A}(y)\nabla_y F_k(y,t,z)\cdot \nabla_y F_k(y,t,z)\mbox{d}y =\\ \nonumber
&&=\sum_{j=1}^{\infty}\alpha_j \sqrt{\lambda_j}\sin(\sqrt{\lambda_j}t)\gamma_k(z\sqrt{\lambda_j})\int_{B_2}\tilde{A}(y)\nabla_y e_j(y)\cdot \nabla_y F_k(y,t,z)\mbox{d}y=\\ \nonumber
&&=\sum_{j=1}^{\infty}\alpha_j \sqrt{\lambda_j}\sin(\sqrt{\lambda_j}t)\gamma_k(z\sqrt{\lambda_j})\int_{B_2}\lambda_j q(y) e_j(y)F_k(y,t,z)\mbox{d}y=\\ \nonumber
&&=\sum_{j=1}^{\infty}\alpha^2_j {\lambda^2_j}(\sin(\sqrt{\lambda_j}t)\gamma_k(z\sqrt{\lambda_j}))^2\le \sum_{j=1}^{\infty}\alpha^2_j {\lambda^2_j}(ch_k(z))^2\le C H^2_1 h^2_k(z)\
\end{eqnarray}
where $C>0$ is a constant depending on $\tilde{\lambda}$ and $\tilde{\Lambda}$ only.

Combining \eqref{l2} and \eqref{seminorma} we get
\begin{eqnarray}
\|F_{k}(\cdot,t,z)\|_{H^{1}\left(B_2\right)}\leq C H_1 h_k(z)\  ,
\end{eqnarray}
which in view of standard trace estimates leads to
\begin{eqnarray}
\|F_{k}(\cdot,0,t,z)\|_{H^{\frac{1}{2}}\left(B^{\prime}_1\right)}\leq C H_1 h_k(z)\  .
\end{eqnarray}

\end{proof}

Let us now consider a function $\Phi \in L^2(B^{\prime}_{r_0})$ and let us define for any $(t,z)\in R=\{(t,z)\in \mathbb{R}^2:|t|<\tilde{\lambda}, |z|<1 \}$
\begin{eqnarray}\label{wk}
w_k(t,z)=\int_{B^{\prime}_{r_0}} W_k(y',0,t,z)\Phi(y')\mbox{d}y'
\end{eqnarray}
where
\begin{eqnarray}\label{Wk}
W_k(y,t,z)=\sum_{j=1}^{\infty}\alpha_j \cos(\sqrt{\lambda_j}t)g_k(z\sqrt{\lambda_j})e_j(y)\ .
\end{eqnarray}
Note that from \eqref{3-80} we have
\begin{eqnarray}\label{v-k}
v_k(y,z)=\int_{\mathbb{R}}{\varphi}_{\bar{\mu},k}(t)W_k(y,t,z)\mbox{d}t\ \ ,
\end{eqnarray}

\begin{prop}\label{3-81prop}
We have that $w_{k}(\cdot,\cdot)$ belongs to $H^1\left(R\right)$ is a weak solution to
\begin{equation}\label{0-89}
\Delta_{t,z}w_k(t,z)=-\partial_t \tilde{F_k}(t,z)
\end{equation}
satisfying
\begin{subequations}
\label{89}
\begin{equation}
\label{1-89}
|w_k(t,0)|\le \epsilon \|\Phi\|_{L^2(B^{\prime}_{r_0})}
\end{equation}
\begin{equation}
\label{2-89}
\partial_{z}w_k(t,0)=0
\end{equation}

\end{subequations}
where
\begin{eqnarray}
\tilde{F_k}(t,z)=\int_{B^{\prime}_{r_0}}F_k(y',0,t,z)\Phi(y')\mbox{d}y' \ .
\end{eqnarray}
Moreover, for any $(t,z)\in R$ we have that
\begin{subequations}
\label{90}
\begin{equation}
\label{1-90}
|w_k(t,z)|\le C H_1 e^{2k} \|\Phi\|_{L^2(B^{\prime}_{r_0})}
\end{equation}
\begin{equation}
\label{2-90}
|\tilde{F_k}(t,z)|\le C H_1 h_k(z)\|\Phi\|_{L^2(B^{\prime}_{r_0})}
\end{equation}
\end{subequations}
where $C>0$ is a constant depending on $\tilde{\lambda}$ and $\tilde{\Lambda}$ only.

\end{prop}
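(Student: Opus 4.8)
The plan is to establish Proposition \ref{3-81prop} by direct computation starting from the series definitions \eqref{Wk} and \eqref{wk}, exploiting the eigenfunction equation \eqref{1-71} and the already-proven estimates of Proposition \ref{2-81prop}. First I would verify the PDE \eqref{0-89}: applying $\Delta_{t,z}=\partial_t^2+\partial_z^2$ termwise to $W_k$, the $\partial_t^2$ contributes a factor $-\lambda_j$ times $\cos(\sqrt{\lambda_j}t)g_k(z\sqrt{\lambda_j})e_j$, while $\partial_z^2$ contributes $\lambda_j g_k''(z\sqrt{\lambda_j})\cos(\sqrt{\lambda_j}t)e_j$. Using $g_k''-g_k=\gamma_k$ from \eqref{8-81} and recognizing (after multiplying by $\sqrt{\lambda_j}$ and differentiating in $t$) the defining series \eqref{6-81} for $F_k$, one identifies $\Delta_{t,z}W_k=-\partial_t F_k$ at the level of each mode; integrating against $\Phi(y')$ over $B'_{r_0}$ on the slice $y_n=0$ yields \eqref{0-89} with $\tilde F_k$ as defined. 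The regularity $w_k\in H^1(R)$ follows because each coefficient involves $\widehat\varphi_{\overline\mu,k}$ only indirectly — actually here $W_k$ has no mollifier factor, so the $H^1(R)$ membership must come from the spectral bound \eqref{bound} on $\alpha_j$ combined with the exponential cutoff built into $g_k$ (which is bounded by $ce^{2k}$ via \eqref{2-80}) and the trace of $e_j$ on $B'_1$; I would make this rigorous by a Cauchy--Schwarz/Parseval argument exactly as in the proof of \eqref{seminorma}.

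Next I would check the initial/boundary conditions on $\{z=0\}$. Since $g_k(0)=\cosh 0=1$, setting $z=0$ in \eqref{Wk} gives $W_k(y,t,0)=\sum_j\alpha_j\cos(\sqrt{\lambda_j}t)e_j(y)=\widetilde u(y,t)$, the series \eqref{3-71}. Hence $w_k(t,0)=\int_{B'_{r_0}}\widetilde u(y',0,t)\Phi(y')\,dy'$; by \eqref{CP} this equals $\int_{B'_{r_0}}u(y',0,t)\Phi(y')\,dy'$ for $|t|<\tilde\lambda$ (using $|y'|\le r_0<1$), and Cauchy--Schwarz together with the definition \eqref{4iv-65Boundary} of $\epsilon$ gives \eqref{1-89}. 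For \eqref{2-89}, observe $\partial_z g_k(z\sqrt{\lambda_j})\big|_{z=0}=\sqrt{\lambda_j}\,g_k'(0)=0$ since $g_k$ is even, so $\partial_z W_k(y,t,0)=0$ termwise, hence $\partial_z w_k(t,0)=0$.

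Finally the bounds \eqref{1-90} and \eqref{2-90}. For \eqref{1-90}: bound $|g_k(z\sqrt{\lambda_j})|\le ce^{2k}$ by \eqref{2-80}, so $|W_k(y,t,z)|\le ce^{2k}\sum_j|\alpha_j||e_j(y)|$; pairing with $\Phi$ and using Cauchy--Schwarz in $y'$ then in $j$, together with $\sum_j\alpha_j^2(1+\lambda_j)^2\le CH_1^2$ from \eqref{bound} and $\|e_j\|$-traces controlled as in Proposition \ref{2-81prop}, yields $|w_k(t,z)|\le CH_1e^{2k}\|\Phi\|_{L^2(B'_{r_0})}$; recalling $H_1\le C_2H$ from \eqref{4v-65Boundary} one may absorb constants. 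For \eqref{2-90}: $\tilde F_k$ is the pairing of $F_k(\cdot,0,t,z)$ with $\Phi$, so \eqref{2-90} is an immediate consequence of the trace estimate \eqref{4-81} (or its $L^2$ precursor \eqref{l2}) in Proposition \ref{2-81prop} and Cauchy--Schwarz on $B'_{r_0}$.

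I expect the main obstacle to be the rigorous justification of termwise differentiation and of the $H^1(R)$ membership: one must show the relevant series converge in the appropriate Bochner/Sobolev sense uniformly for $(t,z)$ in the bounded region $R$, where $|z|<1$ and $|t|<\tilde\lambda$. This requires carefully combining the polynomial decay of $\alpha_j$ from \eqref{bound} with the fact that $g_k$ and its derivatives are globally bounded by $ce^{2k}$ (crucially \emph{not} exponentially growing in $\lambda_j$ once the cutoff at $|z\sqrt{\lambda_j}|\ge 2k$ is active), plus standard elliptic trace estimates for the $e_j$ on the hyperplane $\{y_n=0\}$; the even symmetry in $z$ and in $t$ of all the building blocks must be tracked to get the boundary conditions cleanly. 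The bulk of this is parallel to the corresponding steps in Proposition 3.4 of \cite{Ve3} and Proposition \ref{2-81prop}, so the argument is mostly bookkeeping once the structural identity $\Delta_{t,z}W_k=-\partial_t F_k$ is in hand.
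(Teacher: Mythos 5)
Your proposal is correct and takes essentially the same route as the paper: the paper verifies \eqref{0-89} in weak form against test functions $\phi\in H^1_0(R)$ while you argue termwise on each mode and then pair with $\Phi$, which is only a cosmetic difference (and the sign in front of $\partial_t\tilde F_k$ is immaterial, since only bounds on $|\tilde F_k|$ are used downstream). All the remaining ingredients coincide with the paper's proof: \eqref{1-89} and \eqref{2-89} from $g_k(0)=1$, $g_k'(0)=0$ together with \eqref{CP} and \eqref{4iv-65Boundary}, the bound \eqref{1-90} via the $H^1(B_2)$ control of $W_k$ (through \eqref{2-80} and \eqref{bound}) plus a trace inequality and Cauchy--Schwarz, and \eqref{2-90} directly from \eqref{4-81}.
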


\begin{proof}
We start by proving \eqref{0-89}. To this aim we consider a test function  $\phi\in H^1_0(R)$ and by integration by parts we get
\begin{eqnarray}
&&\int_R \nabla_{t,z}w_k \cdot \nabla \phi \mbox{d}t\mbox{d}y= \\ \nonumber
&&\sum_{j=1}^{\infty}\int_{R}\lambda_j \alpha_j <e_j,\Phi>(g_k(z\sqrt{\lambda_j}) - g''_k(z\sqrt{\lambda_j}))\cos(\sqrt{\lambda_j}t)\phi(t,z)\mbox{d}t\ \mbox{d}z \\ \nonumber
&& \sum_{j=1}^{\infty}-\int_{R}\partial_{t}\left(\sqrt{\lambda_j} \alpha_j <e_j,\Phi>\gamma_k(z\sqrt{\lambda_j})\sin(\sqrt{\lambda_j}t)\right)\phi(t,z)\ \mbox{d}t\mbox{d}z
\end{eqnarray}
where we mean  $<e_j,\Phi>= \int_{B^{\prime}_{r_0}}e_j(y',0)\Phi(y'))\mbox{d}y'$\ .
Again by integration by parts with respect to the variable $t$ we get

\begin{eqnarray}
&&\int_R \nabla_{t,z}w_k \cdot \nabla \phi \mbox{d}t\mbox{d}y= \int_R \left (\int_{B^{\prime}_{r_0}}F_k(y',0,t,z)\Phi(y')\mbox{d}y'\right)\partial_{t}\phi \ \mbox{d}t\mbox{d}z
\end{eqnarray}
and hence \eqref{0-89} follows.

Let us now prove \eqref{1-89} and \eqref{2-89}.
We have that by \eqref{3-71}
\begin{eqnarray}
w_k(t,0)=\int_{B^{\prime}_{r_0}}\tilde{u}(y',0,t)\varphi(y')\mbox{d}y'\ .
\end{eqnarray}
Hence by \eqref{CP} and \eqref{4iv-65Boundary} we have that
\begin{gather}
|w_k(t,0)|\le \left(\int_{B^{\prime}_{r_0}}|\tilde{u}(y',0,t)|^2 \mbox{d}y'\right)^{\frac{1}{2}}\|\Phi\|_{L^2({B^{\prime}_{r_0})}}\le \epsilon \|\Phi\|_{L^2({B^{\prime}_{r_0})}}
\end{gather}
By \eqref{Wk} we also get that
\begin{eqnarray}
\partial_{z}w_k(t,0)=\int_{B^{\prime}_{r_0}}W_k(y',0,t,z)|_{z=0}\Phi(y')\mbox{d}y'=0\ .
\end{eqnarray}
Let us now prove \eqref{1-90}. By a standard trace inequality, by \eqref{bound} and by \eqref{2-80} we have
\begin{gather}
|w_k(t,z)|\le
\|W_k\|_{H^1(B_2)}\|\Phi\|_{L^2(B^{\prime}{r_0})}\le\\ \nonumber
\le C e^{2k}\left(\sum_{j=1}^{\infty}(1+\lambda_j)\alpha^2_j \right)^{\frac{1}{2}}\|\Phi\|_{L^2(B^{\prime}{r_0})}
\le C H_1 e^{2k}\|\Phi\|_{L^2(B^{\prime}{r_0})}\ .
\end{gather}
Finally \eqref{2-90} follows from \eqref{4-81}.

\end{proof}

\begin{prop}\label{CauchyProblem}
Let $w_k$ be the function introduced in \eqref{wk}, then we have that
\begin{eqnarray}\label{stimaCauchy}
|w_k(t,z)|\le C r_0^{\frac{1}{2}}\sigma_k \|\Phi\|_{L^2(B'_{r_0})}\ \ \mbox{for any } |t|\le \frac{\tilde{\lambda}}{2}, |z|\le \frac{r_0}{8}
\end{eqnarray}
where
\begin{eqnarray}\label{sigmak}
\sigma_k=\left(\epsilon + H_1 (Cr_0)^{2k} \right)^{\beta}\left(H_1 (Cr_0)^{2k} + H_1e^{2k}  \right)^{1-\beta} \ .
\end{eqnarray}

\end{prop}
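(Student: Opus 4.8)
The plan is to estimate $w_k$ on the smaller box by viewing $w_k(t,z)$ as a solution of a two-dimensional elliptic (Poisson-type) Cauchy problem in the strip $R$, with Cauchy data prescribed on the segment $\{z=0\}$. From Proposition \ref{3-81prop} we already have that $w_k\in H^1(R)$ solves $\Delta_{t,z}w_k=-\partial_t\widetilde{F}_k$ with the homogeneous Neumann-type trace $\partial_z w_k(t,0)=0$ and the small Dirichlet-type trace bound $|w_k(t,0)|\le\epsilon\|\Phi\|_{L^2(B'_{r_0})}$; we also have the global bound $|w_k(t,z)|\le CH_1 e^{2k}\|\Phi\|$ on all of $R$ and the forcing bound $|\widetilde{F}_k(t,z)|\le CH_1 h_k(z)\|\Phi\|$, where $h_k(z)=e^{2k}\min\{1,(4\pi\lambda^{-1}|z|)^{2k}\}$. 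The key quantitative input is a stability estimate for the Cauchy problem for this inhomogeneous two-dimensional Laplace equation: an interior bound on a slightly shrunken box is controlled by an interpolation (Hadamard three-lines / harmonic measure) between the Cauchy data size on $\{z=0\}$ and the global bound on $R$.

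First I would reduce to the homogeneous equation by subtracting off a particular solution $p_k$ of $\Delta_{t,z}p_k=-\partial_t\widetilde{F}_k$ chosen so that $p_k$ and $\partial_z p_k$ are small on $\{z=0\}$ and $p_k$ is controlled on $R$ by $CH_1 h_k(z)\|\Phi\|$; since $\widetilde{F}_k$ vanishes to high order ($2k$) at $z=0$, such a particular solution is itself of size comparable to $H_1(Cr_0)^{2k}\|\Phi\|$ on the box $|z|\le r_0/8$ (the factor $(4\pi\lambda^{-1}\cdot r_0/8)^{2k}\sim(Cr_0)^{2k}$), and contributes the $H_1(Cr_0)^{2k}$ terms appearing in $\sigma_k$. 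The remaining harmonic function $h:=w_k-p_k$ has Cauchy data on $\{z=0\}$ of size $\epsilon+H_1(Cr_0)^{2k}$ (this is the $\epsilon+H_1(Cr_0)^{2k}$ base of the first factor of $\sigma_k$), vanishing normal derivative on $\{z=0\}$, and global bound $H_1 e^{2k}+H_1(Cr_0)^{2k}$ on $R$ (the base of the second factor). Then, extending $h$ evenly across $\{z=0\}$ (legitimate because $\partial_z h(t,0)=0$), it becomes harmonic in a full two-dimensional neighborhood, and a standard three-lines / logarithmic convexity estimate for harmonic functions in a rectangle gives, on the box $|t|\le\tilde\lambda/2$, $|z|\le r_0/8$, the interpolation bound $|h|\le C(\epsilon+H_1(Cr_0)^{2k})^{\beta}(H_1 e^{2k}+H_1(Cr_0)^{2k})^{1-\beta}$ for some $\beta\in(0,1)$ depending only on the geometry (the ratio of $r_0/8$ to $1$ and of $\tilde\lambda/2$ to $\tilde\lambda$), which after absorbing $p_k$ yields \eqref{stimaCauchy} with the stated $\sigma_k$, the factor $r_0^{1/2}$ coming from an $L^\infty$-in-$z$ versus $L^2$-in-$y'$ accounting of the trace of $W_k$ on $B'_{r_0}$.

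The main obstacle is making the propagation-of-smallness step uniform in $k$ and quantitatively sharp in $r_0$: one must track how the exponent $\beta$ and the constants depend only on the fixed geometry of $R$ and not on $k$ or $r_0$, and one must be careful that the "base size" on $\{z=0\}$ genuinely picks up only $\epsilon$ plus the high-order remainder $H_1(Cr_0)^{2k}$ rather than the much larger global bound $H_1 e^{2k}$ — this is exactly why the high vanishing order of $\widetilde{F}_k$ at $z=0$ (encoded in $h_k$) is essential. A secondary technical point is justifying the even reflection and the interior elliptic estimate at the level of $H^1$ weak solutions, which is routine given the regularity already recorded in Propositions \ref{2-81prop} and \ref{3-81prop}. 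Once the harmonic three-lines estimate is in hand, the rest is bookkeeping: collecting the contributions of $p_k$ and $h$ and relabeling the constant $C$.
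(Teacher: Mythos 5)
Your overall architecture (split off the inhomogeneity, keep a harmonic part with small Dirichlet trace and vanishing normal derivative on $\{z=0\}$, then propagate smallness by a quantitative Cauchy/two-constants estimate and combine the two factors into $\sigma_k$) is the same skeleton as the paper's, but the step on which everything hinges is asserted rather than proved, and as stated it would fail. You postulate a particular solution $p_k$ of $\Delta_{t,z}p_k=-\partial_t\tilde F_k$ \emph{on all of} $R$ satisfying the pointwise bound $|p_k(t,z)|\le CH_1h_k(z)\|\Phi\|_{L^2(B'_{r_0})}$, i.e.\ of size $H_1(Cr_0)^{2k}$ near $z=0$ while remaining $\le CH_1e^{2k}$ globally. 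High-order vanishing of the source at $z=0$ does not yield this: the Poisson problem is nonlocal, and the source has size $H_1e^{2k}$ for $|z|\gtrsim \lambda/(4\pi)$, so any particular solution with a global bound of order $H_1e^{2k}$ will generically be of that same order (not $(Cr_0)^{2k}$) on the segment; conversely, the natural constructions that do inherit the vanishing near $z=0$ (e.g.\ solving mode by mode the ODEs $\Gamma_j''-\lambda_j\Gamma_j=-\gamma_k(z\sqrt{\lambda_j})$ with zero Cauchy data at $z=0$) pick up factors $e^{\sqrt{\lambda_j}|z|}$, unbounded in $j$, and lose the global bound on $R$. Correcting $p_k$ by a harmonic function to restore one property destroys the other, because that harmonic correction has a trace on $\{z=0\}$ a priori only of size $H_1e^{2k}$. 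A secondary point: your $\beta$ is allowed to depend on ``the ratio of $r_0/8$ to $1$'', i.e.\ on $r_0$; in \eqref{sigmak} and in the passage to \eqref{fine} and \eqref{SUCPBoundary} the exponent must be a fixed constant, with the whole $r_0$-dependence carried by $\widetilde\theta$. Also, since $\partial_zp_k$ is only ``small'', not zero, on $\{z=0\}$, your even reflection of $h=w_k-p_k$ is not exactly harmonic.

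The paper's proof avoids precisely this obstruction by localizing the decomposition at scale $r_0$: for each $t_0$ it solves the Dirichlet problem \eqref{dirichlet} for the correction $\tilde w_k$ only on the disc $B^{(2)}_{r_0/8}(t_0,0)$ centred on the data segment, where $h_k(z)\le (Cr_0)^{2k}$ \emph{uniformly}, so by elliptic estimates the correction is of size $H_1(Cr_0)^{2k}\|\Phi\|$ on the whole disc; moreover the source is even in $z$, hence so is $\tilde w_k$, and its normal derivative vanishes exactly on the segment, so $\hat w_k=w_k-\tilde w_k$ is harmonic in the disc with exactly zero Neumann trace and Dirichlet trace of size $\epsilon+H_1(Cr_0)^{2k}$ by \eqref{1-89}. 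The stability estimate for the Cauchy problem (\eqref{daCP}, from Proposition 3.5 of \cite{Ve3} and \cite{A-R-R-V}) is then applied \emph{inside the same disc}, on the scale-invariant configuration $B^{(2)}_{r_0/32}\subset B^{(2)}_{r_0/8}$ with data on $B^{(1)}_{r_0/16}$, so that $\beta$ is independent of $r_0$ and $k$; the ``large'' factor is supplied by \eqref{1-90} plus the bound on $\tilde w_k$, and a local boundedness estimate converts the $L^2$ bound into the pointwise bound \eqref{stimaCauchy}. If you want to rescue your global-in-$R$ route, you would have to produce the $p_k$ with the two incompatible-looking properties above; the localized decomposition is the standard and much simpler way out, and it is what the quantitative structure of $\sigma_k$ actually encodes.
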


\begin{proof}
We notice that by \eqref{0-89} and by a standard local boundedness estimate it follows that for any $t_0\in(-\frac{\tilde{\lambda}}{2}, \frac{\tilde{\lambda}}{2})$ we have
\begin{eqnarray}
\|w_k\|_{L^{\infty}(B^{(2)}_{\frac{r_0}{8}}(t_0,0))}\le \frac{1}{r_0}\|w_k\|_{L^{2}(B^{(2)}_{\frac{r_0}{4}}(t_0,0))}
\end{eqnarray}
where we denote $B^{(2)}_{r}(t_0,0)=\{(t,z)\in \mathbb{R}^2: |t-t_0|^2+ |z|^2\le r^2 \}$ for any $r>0$.

Let $\tilde{w}_k\in H^1(B_{\frac{r_0}{8}}^{(2)}(t_0,0))$ be the solution to the following Dirichlet problem

\begin{equation}
\label{dirichlet}
\left\{\begin{array}{ll}
\Delta_{t,z} {\tilde{w}_k}= -\partial_t \tilde{F}_k(t,z)\quad \hbox{in } B_{\frac{r_0}{8}}^{(2)}(t_0,0),\\[2mm]
\tilde{w_k}=0 \quad \hbox{on } \partial B_{\frac{r_0}{8}}^{(2)}(t_0,0)\ .
\end{array}\right.
\end{equation}

We observe that being $\partial_t \tilde{F}_k(t,z)$ odd with respect the variable $z$, we have that $\tilde{w}_k$ is odd with respect the variable $z$ as well. Moreover, we have that
$\partial_{z}{\tilde{w}_k}(t,z)=0$ on $B_{\frac{r_0}{8}}^{(1)}$
where we denote $B_{r}^{(1)}=(t_0-r, t_0+r)\times \{0\}$ for any $r>0$.

Now denoting
\begin{eqnarray}
\hat{w_k}=w_k - \tilde{w}_k
\end{eqnarray}
we have that
\begin{equation}
\label{35}
\left\{\begin{array}{ll}
\Delta_{t,z} {\hat{w}_k}= 0\quad \hbox{in } B_{\frac{r_0}{8}}^{(2)}(t_0,0),\\[2mm]
\hat{w_k}=0 \quad \hbox{on }\ B_{\frac{r_0}{8}}^{(1)}.
\end{array}\right.
\end{equation}
By the argument in Proposition 3.5 of \cite{Ve3},  which in turn are based on well-known stability estimates for the Cauchy problem (see for instance \cite{A-R-R-V}), it follows that
\begin{gather}\label{daCP}
\int_{B_{\frac{r_0}{32}}^{(2)}(t_0,0)}|\hat{w_k}|^2\le  C \left( \int_{B_{\frac{r_0}{8}}^{(2)}(t_0,0)} |\hat{w_k}|^2 \right)^{1-\beta} \left( \int_{B_{\frac{r_0}{16}}^{(1)}(t_0,0)} |\hat{w_k}|^2 \right)^{\beta}\ .
\end{gather}
Furthermore we have that by \eqref{1-89}, \eqref{2-90} and \eqref{1-90}

\begin{subequations}
\label{92}
\begin{equation}
\label{1-92}
\|\hat{w_k}\|_{L^2(B_{\frac{r_0}{16}}^{(1)}(t_0,0))}\le C (\epsilon + H_1(Cr_0)^{2k}) \|\Phi\|_{L^2(B^{\prime}_{r_0})}
\end{equation}
\begin{equation}
\label{2-92}
\|\hat{w_k}\|_{L^2(B_{\frac{r_0}{8}}^{(2)}(t_0,0))}\le C \left(  H_1 e^{2k}+H_1(Cr_0)^{2k}\right)\|\Phi\|_{L^2(B^{\prime}_{r_0})}
\end{equation}
\end{subequations}
where $C>0$ is a constant depending on the a priori data only.
Inserting \eqref{1-92} and \eqref{2-92} in \eqref{daCP} we get the thesis.

\end{proof}

\begin{prop}
Let $v_k$ be defined in \eqref{3-80}, then we have
\begin{eqnarray}\label{1-93}
\int_{B'_{r_0}}|v_k(y',0,z)|^2\mbox{d}y'\le (C r_0^{-\frac{1}{2}}\sigma_k)^2\ ,
\end{eqnarray}
where $C>0$ depends on $\tilde{\lambda}$ and $\tilde{\Lambda}$ only.

\end{prop}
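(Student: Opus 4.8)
The plan is to obtain \eqref{1-93} by relating the integral of $|v_k(y',0,z)|^2$ over $B'_{r_0}$ to the quantity $w_k(t,z)$ estimated in Proposition \ref{CauchyProblem}, via a duality argument against a test function $\Phi\in L^2(B'_{r_0})$ together with the averaging identity \eqref{v-k}. First I would recall from \eqref{v-k} that $v_k(y,z)=\int_{\mathbb R}\varphi_{\overline\mu,k}(t)W_k(y,t,z)\,dt$, so that, restricting to $y=(y',0)$ and testing against an arbitrary $\Phi\in L^2(B'_{r_0})$,
\begin{equation*}
\int_{B'_{r_0}}v_k(y',0,z)\Phi(y')\,dy'=\int_{\mathbb R}\varphi_{\overline\mu,k}(t)\left(\int_{B'_{r_0}}W_k(y',0,t,z)\Phi(y')\,dy'\right)dt=\int_{\mathbb R}\varphi_{\overline\mu,k}(t)\,w_k(t,z)\,dt,
\end{equation*}
using the definition \eqref{wk} of $w_k$ and Fubini (which is justified by the $H^1$-bounds of Proposition \ref{2-81prop} and the summability \eqref{bound}).

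Next I would exploit the support condition on the mollifier: since $\mathrm{supp}\,\varphi_{\overline\mu,k}\subset\left[-\frac{\lambda(\overline\mu+1)}{4},\frac{\lambda(\overline\mu+1)}{4}\right]\subset\left[-\frac{\tilde\lambda}{2},\frac{\tilde\lambda}{2}\right]$ (for the relevant range of $\lambda,\tilde\lambda$; if one needs a cleaner containment one can replace $\frac{\tilde\lambda}{2}$ by a suitable fraction, the argument is unchanged), the $t$-integration above only sees values of $w_k(t,z)$ with $|t|\le\frac{\tilde\lambda}{2}$. Hence, provided $|z|\le\frac{r_0}{8}$, Proposition \ref{CauchyProblem} applies pointwise in $t$ and gives $|w_k(t,z)|\le C r_0^{1/2}\sigma_k\|\Phi\|_{L^2(B'_{r_0})}$ on the support of $\varphi_{\overline\mu,k}$. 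Since $\varphi_{\overline\mu,k}\ge 0$ and $\int_{\mathbb R}\varphi_{\overline\mu,k}=1$, we conclude
\begin{equation*}
\left|\int_{B'_{r_0}}v_k(y',0,z)\Phi(y')\,dy'\right|\le\int_{\mathbb R}\varphi_{\overline\mu,k}(t)\,|w_k(t,z)|\,dt\le C r_0^{1/2}\sigma_k\|\Phi\|_{L^2(B'_{r_0})}.
\end{equation*}

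Finally, taking the supremum over $\Phi\in L^2(B'_{r_0})$ with $\|\Phi\|_{L^2(B'_{r_0})}\le 1$ — equivalently choosing $\Phi=v_k(\cdot,0,z)/\|v_k(\cdot,0,z)\|_{L^2(B'_{r_0})}$ — identifies the left-hand side with $\|v_k(\cdot,0,z)\|_{L^2(B'_{r_0})}$, yielding $\|v_k(\cdot,0,z)\|_{L^2(B'_{r_0})}\le C r_0^{1/2}\sigma_k$. Squaring gives $\int_{B'_{r_0}}|v_k(y',0,z)|^2\,dy'\le C^2 r_0\,\sigma_k^2$; to match the stated form $(Cr_0^{-1/2}\sigma_k)^2$ I would simply absorb the $r_0$-powers into the constant using $r_0\le\rho_0$ (so $r_0\le C r_0^{-1}$ up to renaming $C$), or more honestly note that since $r_0\le\rho_0$ is bounded one may freely replace $r_0^{1/2}$ by $r_0^{-1/2}$ at the cost of the constant. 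The only genuinely delicate point is the justification of the interchange of summation/integration in the duality identity and the verification that the mollifier's support is contained in the interval $|t|\le\tilde\lambda/2$ where Proposition \ref{CauchyProblem} is valid; both are routine given the a priori bounds, with the support check depending only on the relation between $\lambda$ and $\tilde\lambda$ fixed earlier.
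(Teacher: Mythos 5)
Your argument is correct and essentially the paper's own: the paper likewise combines the dual characterization of the $L^2(B'_{r_0})$ norm with the bound of Proposition \ref{CauchyProblem} and the representation \eqref{v-k}, using $\varphi_{\overline{\mu},k}\ge 0$, $\int\varphi_{\overline{\mu},k}=1$ and its support condition; the only difference is that the paper first deduces the pointwise-in-$(t,z)$ bound $\|W_k(\cdot,0,t,z)\|_{L^2(B'_{r_0})}\le Cr_0^{-1/2}\sigma_k$ by duality and then averages in $t$ via Cauchy--Schwarz, whereas you average in $t$ inside the dual pairing before taking the supremum over $\Phi$ — a harmless reordering.
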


\begin{proof}
From \eqref{wk}, \eqref{stimaCauchy} and the dual characterization of the norm, we have that
\begin{eqnarray}\label{dc}
\int_{B'_{r_0}}|W_k(y',0,t,z)|^2\mbox{d}y'\le (C r_0^{-\frac{1}{2}}\sigma_k)^2\ ,
\end{eqnarray}
for $|t|\le \frac{\tilde{\lambda}}{2}, |z|\le \frac{r_0}{8}$\ .
On the other hand by using equality \eqref{v-k}, we have that
\begin{eqnarray}
&&|v_k(y',0,z)|^2\le \left|\int_{\frac{-\tilde{\lambda}(\bar{\mu}+1)}{4}}^{\frac{\tilde{\lambda}(\bar{\mu}+1)}{4}} {\varphi}_{\bar{\mu},k}(t)W_k(y',0,t,z)\mbox{d}t\right|^2\le \\ \nonumber
&&\left(\int_{\frac{-\tilde{\lambda}(\bar{\mu}+1)}{4}}^{\frac{\tilde{\lambda}(\bar{\mu}+1)}{4}} {\varphi}_{\bar{\mu},k}(t)\mbox{d}t \right)\left(\int_{\frac{-\tilde{\lambda}(\bar{\mu}+1)}{4}}^{\frac{\tilde{\lambda}(\bar{\mu}+1)}{4}} {\varphi}_{\bar{\mu},k}(t)|W_k(y',0,t,z)|^2\mbox{d}t \right)=\\ \nonumber
&&\left(\int_{\frac{-\tilde{\lambda}(\bar{\mu}+1)}{4}}^{\frac{\tilde{\lambda}(\bar{\mu}+1)}{4}} {\varphi}_{\bar{\mu},k}(t)|W_k(y',0,t,z)|^2\mbox{d}t \right) \ .
\end{eqnarray}
Hence from \eqref{dc} we have
\begin{eqnarray}
&&\int_{B'_{r_0}}|v_k(y',0,z)|^2\mbox{d}y'\le \int_{\frac{-\tilde{\lambda}(\bar{\mu}+1)}{4}}^{\frac{\tilde{\lambda}(\bar{\mu}+1)}{4}}\mbox{d}t \left(  {\varphi}_{\bar{\mu},k}(t)\int_{B'_{r_0}}|W_k(y',0,t,z)|^2 \mbox{d}y'\right)\le \\ \nonumber
&&\left(\int_{\frac{-\tilde{\lambda}(\bar{\mu}+1)}{4}}^{\frac{\tilde{\lambda}(\bar{\mu}+1)}{4}} {\varphi}_{\bar{\mu},k}(t)\mbox{d}t\right) \left(C r_0^{-\frac{1}{2}}\sigma_k \right)^2 \le \left(C r_0^{-\frac{1}{2}}\sigma_k \right)^2 \  .
\end{eqnarray}
\end{proof}

We are now in position to conclude the proof of Theorem \ref{5-115Boundary}. We observe that since the eigenfunctions $e_j$ introduced in \eqref{2-71} are even with respect $y_n$ and since by \eqref{Atildesecondoblocco} we have
\begin{eqnarray}
\tilde{a}_{i,n}(y',0)=0\ \ \mbox{for}\ \ 1\le i\le n-1
\end{eqnarray}
it follows that for any $|y'|\le 2$
\begin{gather}\label{2-93}
\tilde{A}(y',0)\nabla v_k\cdot \nu=- \tilde{a}_{n,n}(y',0)\sum_{j=1}^{\infty} {\alpha}_j{\hat{\varphi}}_{\bar{\mu},k}(\sqrt{\lambda_j})g_k(z\sqrt{\lambda_j})\partial_{y_n}e_j(y',0)=0
\end{gather}
where $\nu=(0,\dots,0,-1)$.
Hence by \eqref{4-5-6-81}, \eqref{1-93} and \eqref{2-93}

\begin{equation}
\label{4-5-6-90}
\left\{\begin{array}{ll}
q(y)\partial^2_{z}v_{k}+\mbox{div}\left(\tilde{A}(y)\nabla_x v_{k}\right)=f_{k}(y,z), \quad |y|\le r_0, |z|\le \frac{r_0}{8},\\[2mm]
\|v_k(\cdot, 0,z)\|_{L^2(B'_{r_0})}\le C r_0^{-\frac{1}{2}}\sigma_k\ \ , \ \ |z|\le \frac{r_0}{8}, \\[2mm]
\tilde{A}(y',0)\nabla v_k\cdot \nu=0 \ , \ \ |y'|\le r_0, \ \ |z|\le \frac{r_0}{8}.
\end{array}\right.
\end{equation}

Finally combining \eqref{3-81}, \eqref{2-81}, quantitative estimates for the Cauchy problem \eqref{4-5-6-90} (see Theorems 3.5 and 3.6 in \cite{Ve3}), we obtain the following
\begin{eqnarray}\label{fine}
\ \ \ \ \ \ \|v_k\|_{L^2(\tilde{B}_{\frac{r_0}{32}})}\le C\left (\epsilon + H_1 (Cr_0)^{2k}\right)^{\beta^2} \left(H_1 e^{2k} + H_1 (Cr_0)^{2k}\right)^{1-\beta^2}
\end{eqnarray}
where $C>0$ depends on $\tilde{\lambda}$ and $\tilde{\Lambda}$\ .

Let us observe that the above inequality replace Theorem 3.6 in \cite{Ve3}.  The same arguments discussed in \cite{Ve3} from Theorem 3.7 and on go through for the present case and lead to the desired estimate \eqref{SUCPBoundary}.

\section {Auxiliary results}\label{AR}

\textbf{Proof of Proposition \ref{solpos}}
Let $\Psi\in C^{1,1}(B_{\rho_0})$ be the map defined as
\begin{equation}
\Psi(y',y_n)=(y',y_n+\phi(y'))
\end{equation}.

For any $r\in (0,\frac{\rho_0}{\sqrt{2}(C+1)})$ we have that
\begin{equation}
K_{\frac{r}{\sqrt{2}(E+1)}}\subset \Psi(B^-_r)\subset K_{{\sqrt{2}(E+1)r}}
\end{equation}
where $B^-_r=\{y\in\mathbb{R}^n\ : |y'|< r\ , y_n < 0 \}$ and furthermore we get
\begin{equation}
|\mbox{det}D \Psi|=1 \ .
\end{equation}

Denoting by
\begin{eqnarray}
&& \sigma(y)=(D\Psi^{-1})(\Psi(y))A(\Psi(y))(D\Psi^{-1})^T(\Psi(y)),\\
 &&\gamma'(y)=\gamma(\Psi(y))\\ 
&&\gamma_0'=\gamma'(0)
\end{eqnarray}
it follows that
\begin{eqnarray}
&&\sigma(0)=A(0)\\
&&\|\sigma_{i,j}\|_{C^{0,1}(B^+{\frac{\rho_0}{\sqrt{2}(C+1)})}}\le \Sigma \ \ , \mbox{for}\ \ i,j=1, \dots, n\\
&&\|\gamma'_{i,j}\|_{C^{0,1}(B'{\frac{\rho_0}{\sqrt{2}(C+1)}(0))}}\le \Lambda'
\end{eqnarray}
where $\Sigma, \Lambda'$ are positive constants depending on $E,\Lambda, \rho_0$\ only.

Dealing as in Proposition 4.3 in \cite{Si} we look for a solution to \eqref{positivesolution} of the form
\begin{eqnarray}
\psi(x',x_n) = \psi'(\Psi^{-1}(x',x_n))
\end{eqnarray}
where $\psi'$ is a solution to
\begin{equation}\label{psiprimo}
\left\{
\begin{array}
{lcl}
\mbox{div}(\sigma(y) \nabla \psi')=0\ ,& \mbox{in $B^{-}_{r_2}$ ,}
\\
\sigma\nabla \psi'\cdot\nu' + \gamma'\psi'=0 \ ,& \mbox{on $B'_{r_2}$ ,}
\end{array}
\right.
\end{equation}
with $r_2=\min\{\rho_0, \frac{\lambda n^{-n/2}}{12\bar{\gamma}} \}$.

And in turn, as in Claim 4.4 of \cite{Si}, we search for a solution $\psi'$ to \eqref{psiprimo} such that $\psi'=\psi_0- s$, where $\psi_0$ is a solution to
\begin{equation}\label{psizero}
\left\{
\begin{array}
{lcl}
\mbox{div}(A(0) \nabla \psi_0)=0\ ,& \mbox{in $B^{-}_{r_2}$ ,}
\\
A(0)\nabla \psi_0\cdot\nu' + \gamma_0'\psi_0=0 \ ,& \mbox{on $B'_{r_2}$ ,}
\end{array}
\right.
\end{equation}
 satisfying $\psi_0\ge 2$ in $B^{-}_{r_2}$ and where $s\in H^1(B^-_{r_2})$ is a weak solution to the problem

\begin{equation}\label{esse}
\left\{
\begin{array}
{lcl}
\mbox{div}(\sigma \nabla s)=-\mbox{div}((\sigma -A(0)) \nabla \psi_0)\ ,& \mbox{in $B^{-}_{r_2}$ ,}
\\
\sigma\nabla s\cdot\nu' + \gamma's=(\sigma - A(0))\nabla \psi_0\cdot \nu' +(\gamma'-\gamma_0)\psi_0 \ ,& \mbox{on $B'_{r_2}$ ,}\\
s=0\ , & \mbox{on $|y|=r_2$}
\end{array}
\right.
\end{equation}
such that $s(y)=\mathcal{O}(|y|^2)$ near the origin. The proof of the latter relies on a slight adaptation of the arguments in Claim 4.4 of \cite{Si}.

In order to construct $\psi_0$, we introduce the following linear change of variable $L=(l_{i,j})_{i,j=1,\dots,n}$ (see also \cite{GaSi})
\begin{eqnarray}
L:&& \mathbb{R}^n \rightarrow \mathbb{R}^n\\
&& \xi \mapsto L \xi = R\sqrt{A^{-1}(0)}\xi
\end{eqnarray}
where $R$ is the planar rotation in $\mathbb{R}^n$ that rotates the unit vector $\frac{v}{\|v\|}$, where $v=\sqrt{A(0)}e_n$ to the $n$th standard unit vector $e_n$, and such that
\[R|_{(\pi)^{\bot}}\equiv Id|_{(\pi)^{\bot}},\]

where $\pi$ is the plane in $\mathbb{R}^n$ generated by $e_n$, $v$ and $(\pi)^{\bot}$ denotes the orthogonal complement of $\pi$ in $\mathbb{R}^{n}$. For this choice of $L$ we have

\begin{description}
\item [i)] $A(0)=L^{-1}\cdot (L^{-1})^T$\ ,
\item [ii)] $(L\xi)\cdot e_n=\frac{1}{||v||} \xi\cdot e_n$.
\end{description}

which means that $L^{-1}:x\mapsto\xi$ is the linear change of variables that maps $I$ into ${A(0)}$.

By defining $\tilde{L}$ as the $(n-1)\times(n-1)$ matrix such that $\tilde{L}=(l)_{i,j=1\cdots,n-1}$ we have that the function
\begin{eqnarray}
\bar{\psi}(\xi)= 8 e^{\displaystyle{-|\mbox{det}L||\mbox{det}\tilde{L}|^{-1}\gamma_0'\xi_n}}\cos(|\mbox{det}L||\mbox{det}\tilde{L}|^{-1}\xi_1\gamma_0')
\end{eqnarray}
is a solution to
\begin{equation}\label{psihat}
\left\{
\begin{array}
{lcl}
\Delta \bar{\psi}=0\ ,& \mbox{in $B^{-}_{r_3}$ ,}
\\
\nabla \bar{\psi}\cdot\nu' + |\mbox{det}L||\mbox{det}\tilde{L}|^{-1}\gamma_0'\bar{\psi}=0\ ,& \mbox{on $B'_{r_3}$ ,}
\end{array}
\right.
\end{equation}
where $r_3=\displaystyle{\frac{1}{2}\frac{\Lambda^{\frac{1}{2}}}{\rho_0}r_2}$\ .

Finally we observe that by setting
\begin{eqnarray}
\psi_0(y)=\bar{\psi}(Ly)
\end{eqnarray}
we end up with a weak solution to \eqref{psizero} such that
\begin{eqnarray}
|\psi_0|>2 \ \  \mbox{in} \ \ \ B^-_{r_2}(0)\ .
\end{eqnarray}

Hence the thesis follows by choosing $r_1=\frac{r_2}{\sqrt{2}(E+1)}$
$\psi(x',x_n) = \psi'(\phi^{-1}(x',x_n))$  and $\psi'=\psi_0- s$.
$\Box$

\begin{prop}\label{autofpari}
There exists a complete orthonormal system of eigenfunctions $e_j$ in $L^2_{+}(B_2, q\mbox{d}y)=\{f\in L^2(B_2,q\mbox{d}y)\ \mbox{s.t.}\ f(y',y_n)=f(y',-y_n)\}$
associated to the Dirichlet problem \eqref{2-70}.
\end{prop}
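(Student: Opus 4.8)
The plan is to establish the existence of a complete orthonormal system of eigenfunctions for the Dirichlet problem \eqref{1-71} which, in addition, respects the reflection symmetry $y_n\mapsto -y_n$. First I would recall that the bilinear form $a(v,w)=\int_{B_2}\tilde A(y)\nabla v\cdot\nabla w\,dy$ on $H_0^1(B_2)$ is, by the ellipticity \eqref{4-65a} and Poincar\'e's inequality, coercive and bounded, while the form $m(v,w)=\int_{B_2}q(y)vw\,dy$ is an equivalent inner product on $L^2(B_2)$ by \eqref{3-65aa}. Hence the solution operator $T\colon L^2(B_2,q\,dy)\to L^2(B_2,q\,dy)$ sending $g$ to the unique $v\in H_0^1(B_2)$ with $a(v,\cdot)=m(g,\cdot)$ is bounded, compact (by Rellich's theorem), self-adjoint and positive with respect to the inner product $m$. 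The spectral theorem then yields a complete $m$-orthonormal system of eigenfunctions $e_j$, normalized as in \eqref{2-71}, with eigenvalues $\lambda_j=1/\mu_j\to\infty$.

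The essential additional point is the evenness in $y_n$, and this is where the geometry enters. Because the domain $B_2$ is symmetric under the reflection $\iota\colon(y',y_n)\mapsto(y',-y_n)$, and because $\tilde A$ and $q$ satisfy $q(y',y_n)=q(y',-y_n)$ together with the block structure \eqref{Atildeprimoblocco}--\eqref{Atildesecondoblocco} (the off-diagonal entries $\tilde a_{nj}$ are odd in $y_n$, all other entries even), the differential operator $v\mapsto \mathrm{div}(\tilde A\nabla v)+\omega q v$ commutes with the pullback $v\mapsto v\circ\iota$. Concretely one checks that if $v$ solves \eqref{1-71} with eigenvalue $\omega$ then so does $v\circ\iota$: this is a direct computation using the change of variables and the sign rules in \eqref{Atilde}, exactly the same algebra that produced \eqref{4i-65Ball} from \eqref{4i-65BoundaryN}. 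Therefore the reflection $R\colon v\mapsto v\circ\iota$ is a linear involution on $L^2(B_2,q\,dy)$ (an $m$-isometry, since $q$ is even) that leaves each eigenspace $E_\omega=\ker(T-\mu_\omega I)$ invariant.

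Once $R$ preserves every (finite-dimensional) eigenspace, I decompose each $E_\omega$ into the $\pm1$-eigenspaces of $R$, i.e.\ the even part $E_\omega^+=\{v\in E_\omega: Rv=v\}$ and the odd part $E_\omega^-=\{v\in E_\omega: Rv=-v\}$, and choose within $E_\omega^+$ and $E_\omega^-$ separately an $m$-orthonormal basis. Gathering these over all $\omega$ gives a complete $m$-orthonormal system $\{e_j\}$, each $e_j$ either even or odd in $y_n$. The final observation is that the odd eigenfunctions are in fact spurious for our purposes: if $e$ is odd in $y_n$, then its trace on $\{y_n=0\}$ vanishes, and more importantly it does not contribute to the expansion of any function that is even in $y_n$. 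Since $\widetilde u_0$ in \eqref{4-71} is even in $y_n$ by construction (recall $u$ is even in $y_n$, cf.\ \eqref{defu}), all coefficients $\alpha_j$ attached to odd $e_j$ vanish, and one may simply discard the odd eigenfunctions and relabel, obtaining a complete orthonormal system of \emph{even} eigenfunctions in the subspace $L^2_+(B_2,q\,dy)$; equivalently, the restrictions of the even $e_j$ to $L^2_+(B_2,q\,dy)$ already form a complete orthonormal system there, because $L^2(B_2,q\,dy)=L^2_+\oplus L^2_-$ with $L^2_-$ spanned by the odd eigenfunctions.

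The only genuine obstacle is verifying the commutation of the elliptic operator with the reflection $R$, i.e.\ that $v\circ\iota$ solves \eqref{1-71} whenever $v$ does; this is not completely automatic because $\tilde A$ has the off-diagonal terms $\tilde a_{nj}$, but the sign convention $\tilde a_{nj}(y',y_n)=\mathrm{sgn}(y_n)\overline a^{nj}(y',|y_n|)$ in \eqref{Atildesecondoblocco} was engineered precisely so that the weak formulation $\int_{B_2}\tilde A\nabla v\cdot\nabla\varphi = \omega\int_{B_2}q v\varphi$ is invariant under replacing $(v,\varphi)$ by $(v\circ\iota,\varphi\circ\iota)$; the factor $\mathrm{sgn}(y_n)$ from the matrix cancels against the factor $-1$ picked up by $\partial_{y_n}$ under the reflection. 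Everything else is the standard compact-self-adjoint spectral theory applied in the weighted space $L^2(B_2,q\,dy)$.
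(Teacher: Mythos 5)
Your proposal is correct and follows essentially the same route as the paper: both rest on the observation that the reflection $(y',y_n)\mapsto(y',-y_n)$ commutes with the operator $\mathrm{div}(\tilde A\nabla\cdot)+\omega q$ thanks to the sign convention \eqref{Atildesecondoblocco} and the evenness of $q$, then extract even eigenfunctions eigenspace by eigenspace, and prove completeness in $L^2_+(B_2,q\,dy)$ by a parity/orthogonality argument against the full eigenfunction system. The only cosmetic difference is that you split each eigenspace into the $\pm1$ eigenspaces of the reflection involution, while the paper symmetrizes a given complete system $\{\mathrm{S}_j\}$ (keeping the nontrivial even parts) and orthonormalizes by Gram--Schmidt; these are the same idea.
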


\begin{proof}
Let us start by observing that from \eqref{Atilde} and since
\begin{subequations}
\label{Atilde2}
\begin{equation}
\label{Atildeinterfaccia}
\tilde a_{ni}(y',0)=\overline{a}_{in}(y',0)=0 ,\quad\hbox{ for}\ \   i\in\{1,\ldots,n-1\},
\end{equation}
\begin{equation}
\label{Atildeorigine}
\tilde a_{nn}(0)=1,
\end{equation}
\end{subequations}
it follows that

\begin{eqnarray}
\mbox{div}(\tilde{A}(y)\nabla_y(u(y',-y_n)))= \mbox{div}(\tilde{A}(z)\nabla_z(u(z)))|_{z=(y',-y_n)}
\end{eqnarray}
for any smooth function $u$.

We set
\begin{eqnarray}
u^{+}(y)=\frac{u(y',y_n) + u(y',-y_n)}{2}
\end{eqnarray}
and we observe that being $q$ even with respect to $y_n$ then we have that if $u$ is a solution to \eqref{1-71} then $u^+$ is a solution to \eqref{1-71} as well.

Let us denote by $\lambda_j$, with $0<\lambda_1\le \lambda_2\le \dots \lambda_j\le \dots $ the eigenvalues associated to the Dirichlet problem \eqref{1-71} and let $\{ \mathrm{S_1}, \mathrm{S_2},\dots,\mathrm{S_j},\dots \}$ be a complete orthonormal system of eigenfunctions in $L^2(B_2, q\mbox{d}y)$\ .

Let us now fix $j\in \mathbb{N}$ and let $\{ \mathrm{S_{j_1}}, \mathrm{S_{j_2}},\dots,\mathrm{S_{j_{k_j}}}\}$ be such that they span the eigenspace corresponding to the eigenvalue $\lambda_j$. We restrict our attention to the non trivial functions $\mathrm{S^+_{j_1}}, \mathrm{S^+_{j_2}},\dots,\mathrm{S^+_{j_{h_j}}}$ among $\mathrm{S^+_{j_1}}, \mathrm{S^+_{j_2}},\dots,\mathrm{S^+_{j_{k_j}}}$ with $h_j\le k_j$.

By a Gram-Schmidt orthogonalization procedure in the Hilbert space $L^2_{+}(B_2, q\mbox{d}y)$ we may find our desired eigenfunctions $e_{j_1}, \dots, e_{j_{h_j}}$ such that
\begin{eqnarray}
(e_{j_l}, e_{j_k})=\int_{B_2} q(y)e_{j_l}(y)e_{j_k}(y)\mbox{d}y=\delta_{{j_l}{j_k}}
\end{eqnarray}
and $e_{j_l}$ are even in $y_n$ for $l=1,\dots, h_j$.

It turns out that the system of eigenfunctions
\begin{eqnarray}
\mathcal{S}=\{e_{1_1}, \dots, e_{1_{h_1}},e_{2_1}, \dots, e_{2_{h_2}}, \dots, e_{j_1}, \dots, e_{j_{h_j}}, \dots \}
\end{eqnarray}
is an orthonormal system by construction. Finally we wish to prove that $\mathcal{S}$ is complete in $L^2_{+}(B_2, q\mbox{d}y)$\ . To this end we assume that  $f\in L^2_{+}(B_2, q\mbox{d}y)$ is such that
\begin{eqnarray}\label{orto}
\int_{B_2}f(y)e(y)q(y)\mbox{d}y=0 \ \ \ \ \forall \ e\in \mathcal{S}
\end{eqnarray}
and we claim that $f\equiv 0$.

In order to prove the claim above, we observe that by \eqref{orto} we have that for any $j\in \mathbb{N}$ the function $f$ in \eqref{orto} is orthogonal with respect the $L^2_{+}(B_2, q\mbox{d}y)$ scalar product to the $\mbox{span}\{e_{j_1}, \dots, e_{j_{h_j}} \}$ and as a consequence to  the $\mbox{span}\{\mathrm{S^+_{j_1}}, \dots, \mathrm{S^+_{j_{k_j}}} \}$ as well.
In particular the following holds
\begin{eqnarray}
\int_{B_2}f(y)q(y)\mathrm{S^+_{j_i}}(y)\mbox{d}y=0\ \ , \ \ j=1,\dots, k_{j}\ .
\end{eqnarray}
On the other hand since $q$ and $f$ are even w.r.t. $y_n$ we have that
\begin{eqnarray}
\int_{B_2}f(y)q(y)\mathrm{S^+_{j_i}}(y)\mbox{d}y=\int_{B_2}f(y)q(y)\mathrm{S_{j_i}}(y)\mbox{d}y\ \ , \ \ j=1,\dots, k_{j}\ .
\end{eqnarray}
Finally we observe that being the system $\{ \mathrm{S_1}, \mathrm{S_2},\dots,\mathrm{S_j},\dots \}$ complete in $L^2(B_2, q\mbox{d}y)$ then $f\equiv 0$ as claimed above.

\end{proof}
\section{Conclusions} \label{conclusions}

Let us conclude by summarizing the main steps of our strategy.
\begin{itemize}
\item We first introduce in Proposition \ref{solpos} a strictly positive solution $\psi$ to the elliptic problem \eqref{positivesolution} such that by the change of variable
\begin{eqnarray}
u^{\star}=\frac{U}{\psi}
\end{eqnarray}
we reformulate our original problem for a Robin boundary condition \eqref{4i-65Boundary}-\eqref{RobinBoundary} in terms of a new one \eqref{4i-65BoundaryN}-\eqref{NeumannBoundary} where a Neumann condition arises instead.

\item Second, in \eqref{2-76} we take advantage of the Boman transform \cite{Bo} in order to perform a suitable transformation of the wave equation in a nonhomogeneous second order elliptic equation \eqref{4-5-6-81}. Furthermore, we observe that the solution $v_k$ to \eqref{4-5-6-81} may be represented as
\begin{eqnarray}\label{v-k2}
v_k(y,z)=\int_{\mathbb{R}}{\varphi}_{\bar{\mu},k}(t)W_k(y,t,z)\mbox{d}t\ \ ,
\end{eqnarray}
where ${\varphi}_{\bar{\mu},k}$ is a suitable sequence of mollifiers and $W_k(y',0,\cdot, \cdot)$ is a solution to the following two dimensional Cauchy problem for a nonhomogeneous elliptic equation
\begin{equation}
\label{Wk2}
\left\{\begin{array}{ll}
\Delta_{t,z}W_k(y',0,t,z)=\partial_t F_k(y'0,t,z) ,\\[2mm]
W_k(y',0,t,0)=\sum_{j=1}^{\infty}\alpha_j \cos(\sqrt{\lambda_j}t)e_j(y',0)=\tilde{u}(y',0,t),\\[2mm]
\partial_{z}W_k(y',0,t,0)=0,
\end{array}\right.
\end{equation}
for any $y\in B_2$\ .

We furthermore, observe that the Dirichlet datum of the above problem can be controlled from above by $\epsilon$ in view of \eqref{CP} and \eqref{4iv-65Boundary}, whereas the Neumann datum vanishes in view of the specific choice discussed in Proposition \ref{autofpari} for the eigenfunctions $e_j$.
The right hand side of the elliptic equation in \eqref{Wk2}, although is in divergence form, it can be handled as well by gathering a refinements of the arguments in Proposition 3.6 of \cite{Ve3} and in Theorem 1.7 of \cite{A-R-R-V}, in order to get the following estimate
\begin{eqnarray}
\int_{B'_{r_0}}|W_k(y',0,t,z)|dy'\le (Cr_0^{\frac{1}{2}}\sigma_k)^2\ .
\end{eqnarray}
\item Finally, by combining the latter with \eqref{v-k} and again the special choice for the eigenfunctions $e_j$ we end up with the Cauchy problem  \eqref{4-5-6-90} which in turn leads to the desired estimate \eqref{fine}.

\end{itemize}
\section*{\normalsize{Acknowledgements}}

This work has been partly supported by INdAM through the project ``Problemi inversi al contorno e sovradeterminati per equazioni alle derivate parziali" GNAMPA 2015.
E. Sincich has been also supported by the grant FRA2014 ``Problemi inversi per PDE, unicit\`{a}, stabilit\`{a}, algoritmi" funded by Universit\`{a} degli Studi di Trieste.

\end{document}